\documentclass[reqno]{amsart}
\usepackage[scale=0.75, centering, headheight=14pt]{geometry}
\usepackage[latin1]{inputenc}
\usepackage[T1]{fontenc}
\usepackage{lmodern}
\usepackage[english]{babel}

\usepackage{stmaryrd}
\usepackage{tikz}
\usepackage{bbm}
\usepackage{amsmath,amssymb,amsfonts,amsthm}
\usepackage{mathtools,accents}
\usepackage{mathrsfs}
\usepackage{xfrac}
\usepackage{array} 
\usepackage{aliascnt}
\usepackage{booktabs} 
\usepackage{array} 

\usepackage{verbatim} 
\usepackage{subfig} 
\usepackage{mathrsfs}
\usepackage{mathrsfs, dsfont}
\usepackage{amssymb}
\usepackage{amsthm}
\usepackage{amsmath,amsfonts,amssymb,esint}
\usepackage{graphics,color}
\usepackage{enumerate}
\usepackage{mathtools,centernot}

\usepackage{microtype}
\usepackage{paralist} 
\usepackage{cases}
\usepackage[initials]{amsrefs}
\allowdisplaybreaks

\usepackage{braket}
\usepackage{bm}

\usepackage[citecolor=blue,colorlinks]{hyperref}
\addto\extrasenglish{}

\usepackage{enumerate}
\usepackage{xcolor}

\usepackage{aliascnt}

\makeatletter
\def\newaliasedtheorem#1[#2]#3{
  \newaliascnt{#1@alt}{#2}
  \newtheorem{#1}[#1@alt]{#3}
  \expandafter\newcommand\csname #1@altname\endcsname{#3}
}
\makeatother

\theoremstyle{plain}
\newtheorem{thmx}{Theorem}

\newtheorem*{conx}{Open Question}
\newtheorem{theorem}{Theorem}[section]
\newaliasedtheorem{lemma}[theorem]{Lemma}
\newaliasedtheorem{prop}[theorem]{Proposition}
\newaliasedtheorem{claim}[theorem]{Claim}
\newaliasedtheorem{corollary}[theorem]{Corollary}

\theoremstyle{remark}

\theoremstyle{definition}
\newaliasedtheorem{Def}[theorem]{Definition}
\newaliasedtheorem{example}[theorem]{Example}

\theoremstyle{remark}
\newaliasedtheorem{remark}[theorem]{Remark}
\newaliasedtheorem{ex}[theorem]{Example}

\numberwithin{equation}{section}

\def\eps{\varepsilon}
\DeclareMathOperator{\tr}{tr}
\def\R{\mathbb R}
\def\N{{\mathbb N}}

\DeclareMathOperator{\dv}{div}
\DeclareMathOperator{\Lip}{Lip}

\DeclareMathOperator{\curl}{curl}

\DeclareMathOperator{\rank}{rank}
\DeclareMathOperator{\diam}{diam}

\DeclareMathOperator{\Sym}{Sym}
\DeclareMathOperator{\co}{co}
\DeclareMathOperator{\cof}{cof}
\DeclareMathOperator{\loc}{loc}
\DeclareMathOperator{\id}{id}

\DeclareMathOperator{\spt}{spt}

\DeclareMathOperator{\dist}{d}

\title{Rigidity of shear flows of the Euler equations in the plane}
\author[R. Tione]{Riccardo Tione}

\begin{document}
	
\begin{abstract}
In this paper we show that steady states $u$ of the pressureless Euler equation which belong to $L^3_{\loc}(\mathbb{R}^2,\mathbb{R}^2)$ are shear flows.\ This is achieved by combining results of degenerate Monge-Amp\`ere-type equations with the theory of two dimensional transport equations.\ We also show that the problem of rigidity and flexibility for the associated differential inclusion is rigid for sequences equibounded in $L^{4+}$ and flexible for sequences equibounded in $L^{4-}$, thus displaying a gap in the rigidity exponent between the exact and the approximate problem.\
\end{abstract}
	
	\maketitle
	
\noindent
\textbf{Keywords:} Shear flows, differential inclusions, rigidity and flexibility of equations in the plane.
\par
\medskip\noindent
{\sc MSC (2020): 35D30, 35Q31, 35Q49.
	\par
}

\section{Introduction}

Let $\Omega\subset \R^2$ be an open and connected set.\ In this paper we consider $L^2(\Omega,\R^2)$ solutions of the system:
\begin{equation}\label{sys0}
	\begin{cases}
		\dv(u\otimes u) = 0,  &\text{ in }\Omega,\\
		\dv(u) = 0,  &\text{ in }\Omega.\\
	\end{cases}
\end{equation}
This system is solved by the simplest nontrivial solutions to the incompressible Euler equations:
\begin{equation}\label{eq:Eul}
	\partial_t u + \dv(u\otimes u) + Dp = 0, \quad \dv u  = 0,
\end{equation}
namely \emph{shear flows}, i.e. solutions $u$ of the form $u(x) = g((x,a))a^\perp$, for some fixed vector $a \in \R^2$ and a sufficiently regular profile $g: \R \to \R$.\ The question we study in this work is under which regularity solving \eqref{sys0} implies that $u$ is a shear flow.\ If this happens, we will say that system \eqref{sys0} is rigid.
\\
\\
The topic of rigidity and flexibility, intended in a broad sense, for solutions to the incompressible Euler equations has received considerable attention in the literature in recent years.\ The line of research which is closest to the topics of this paper is represented by \cite{Hamel2016,Hamel2019,Hamel2021,Drivas2024,Constantin2021,Elgindi2024,Gui2024,Ruiz2023,Wang2023,Enciso2024}.\ In these papers, the common theme is to study under which properties on the domain and on the time independent solution to \eqref{eq:Eul} one obtains additional symmetries or even stronger rigidity properties of the solution.\ As an example, one of the main results of the first of the mentioned papers \cite[Theorem 1.1]{Hamel2016} shows that if $u = (u_1,u_2)$ is a steady state of the planar Euler equations which is $C^2$ on a vertical strip, $u_2 = 0$ on the boundary of the strip, and $|u|$ is bounded below by a positive constant, then $u$ must be a shear flow.\ For other results on steady states, see also \cite{Choffrut2012,Enciso2025,Constantin2019,Gavrilov2019,Drivas2023}.\ 

Another setting where rigidity and flexibility has been studied are the deep convex integration works \cite{LINF,CONT,DECIMO,QUINTO,IS} that, together with \cite{CET}, settled Onsager's conjecture.\ On the topic, see also \cite{Novack2023, Giri2024, Giri2023,Brue2024,DRTE,Choffrut2014} and references therein.\ While these works are further from the topic of this paper, the convex integration techniques we use here are still quite close to \cite{LINF,Choffrut2014}, and the (conjectural) threshold for rigidity $p = 3$ that we will mention in Section \ref{s:es} is related, as in Onsager's conjecture, to guaranteeing that a trilinear term involving the solution $u$ is well-behaved. 
\\
\\
The analysis of \eqref{sys0} is rather different than the one of \eqref{eq:Eul} and is in fact substantially simpler.\ The main reason is that the absence of the time derivative and the pressure terms turn \eqref{eq:Eul} into a pure transport equation \eqref{sys0}, in which $u$ is transported by its own flow, at least if $u$ is smooth.\ This fact is also at the essence of the rigidity results we show.\ In parallel to the results we present here, one may ask what happens in the time-dependent, pressureless version of \eqref{eq:Eul}.\ T. Drivas and P. Isett \cite{ID} informed the author that, if one looks at $C^\alpha_{t,x}$ solutions to the pressureless version of \eqref{eq:Eul}, for $\alpha$ sufficiently small one can show flexibility of the system via convex integration constructions, while if $\alpha > \frac{1}{2}$ solutions are uniquely determined by their initial conditions.\ In addition, if $\alpha > \frac{2}{3}$, then $u$ must necessarily be a shear flow.\ Rather interestingly, while the time-dependent case requires solutions to be at least $C^\alpha$, the correct scale to study problem \eqref{sys0} is the one of far less regular $L_{\loc}^p$ spaces.\ The reason is that in order to study the pressureless version of \eqref{eq:Eul}, one needs to take the divergence of the Euler equations and work with the equation
\[
\dv(\dv (u\otimes u)) = 0,
\]
which is a very weak form of the Monge-Amp\`ere equation, for which the correct setting is the $C^\alpha$ scale, see for instance \cite{L1,Cao2019,Cao2025a} and references therein.\ To explain why in the case of interest for this work $u \in L^p_{\loc}$ is sufficient to hope for rigidity, let us now move to explain in detail the results of this paper.
\\
\\
First of all, notice that system \eqref{sys0} needs to be considered in the sense of distributions as:
\begin{align}
&\int_{\Omega}(u,DX u)\;dx = 0, \quad \forall X \in C^\infty_c(\Omega,\R^{2\times 2}), \label{weakuou}\\
&\int_{\Omega}(u,D\eta)\;dx = 0, \quad \forall \eta \in C^\infty_c(\Omega), \label{weakdiv}
\end{align}
respectively. If $\Omega \neq \R^2$, we add some boundary conditions, the most natural being:
\begin{equation}\label{sys}
	\begin{cases}
	\dv(u\otimes u) = 0,\;\dv(u) = 0,  \text{ in }\Omega,\\
	u\cdot n = 0, \text{ on }\partial\Omega,
	\end{cases}
\end{equation}
if $n$ represents the (outer) unit normal to $\Omega$.\ This means that $u$ still solves \eqref{weakuou}, while \eqref{weakdiv} is replaced by 
\begin{equation}\label{weak}
	\int_{\Omega}(u,D\eta)\;dx = 0,\quad \forall \eta \in C^\infty_c(\R^2).
\end{equation}
Our results concern both exact and approximate solutions.

\subsection{Exact solutions}\label{s:es}
Our main rigidity result is the following:
\begin{thmx}[Rigidity for exact solutions]\label{t:b}
	Let $\Omega \subset \R^2$ be a domain such that $(\overline{\Omega})^c$ has finitely many connected components and $\partial \Omega = \partial((\overline{\Omega})^c)$.\ Then:
	\begin{enumerate}[(i)] 
		\item\label{st:1} if $u \in L^2_{\loc}(\Omega,\R^2)$ is a solution to system \eqref{sys} with $\chi_\Omega u \in L^3_{\loc}(\R^2,\R^2)$, then $\chi_\Omega u$ solves \eqref{sys0} in $\R^2$;
		\item\label{st:2} if $\Omega = \R^2$ and $u \in L^3_{\loc}(\R^2,\R^2)$ then  $u$ is a shear flow; 
		\item\label{st:3} if $\Omega = \R^2$ and $u \in L^p(\R^2,\R^2)$ for some $p \in [2,\infty)$, then $u \equiv 0$.
	\end{enumerate} 
\end{thmx}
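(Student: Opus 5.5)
We plan to prove the three parts in order, with (ii) carrying the real content.

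\emph{Part (i).} Here the boundary condition $u\cdot n=0$, encoded in \eqref{weak}, together with divergence-freeness, gives that $v:=\chi_\Omega u$ is divergence-free in all of $\R^2$. It remains to check \eqref{weakuou} in $\R^2$ for test fields $X$ whose support may meet $\partial\Omega$. On the simply connected pieces of $\R^2$ we write $v=\nabla^\perp\psi$ with $\psi\in W^{1,3}_{\loc}$. The complement $(\overline\Omega)^c$ has finitely many components and $\partial\Omega=\partial((\overline\Omega)^c)$, so $\psi$ is constant on each component of $(\overline\Omega)^c$; finitely many constants arise, and we can adjust them locally. The key step is then an approximation. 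We take cut-offs $\varphi_\eps\in C^\infty_c(\Omega)$ adapted to the level structure of $\psi$ near $\partial\Omega$, for instance $\varphi_\eps=\theta_\eps(\psi-c_i)$ near the $i$-th boundary component, and test \eqref{weakuou} with $\varphi_\eps X$. The error term is $\int (v, D\varphi_\eps\otimes \cdot\, v)$, which involves $(v,\nabla\varphi_\eps)$. If $\varphi_\eps$ is a function of $\psi$, then $\nabla\varphi_\eps\parallel\nabla\psi\perp v$, so this term vanishes pointwise. This is exactly why cut-offs built from the stream function work. Where a Lipschitz substitute is needed instead, we control the commutator using $v\in L^3$ through a trilinear estimate. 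Altogether this shows $\dv(v\otimes v)=0$ in $\R^2$.

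\emph{Part (ii).} We write $u=\nabla^\perp\psi$ with $\psi\in W^{1,3}_{\loc}(\R^2)$. The equation $\dv(u\otimes u)=0$ then says that $\cof D^2\psi$-type quantities vanish, namely $\dv\dv(\nabla^\perp\psi\otimes\nabla^\perp\psi)=0$ in a strong form: componentwise we get $\dv(u\, u_j)=0$ for $j=1,2$. So each component $u_j$ is transported by the divergence-free field $u$, and more generally so is every $f(u)$. With $u\in L^3$ the trilinear term $\int u_j u\cdot\nabla(\ldots)$ makes sense. Since $u\in L^3$ only, we cannot directly invoke DiPerna--Lions; instead we use the two-dimensional theory (Alberti--Bianchini--Crippa), which exploits the Hamiltonian structure of $\psi$. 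Along the connected components of the level sets of the Lipschitz-type function $\psi$, the transported quantities $u$ are constant. This gives a very degenerate Monge--Amp\`ere statement: $\nabla\psi$ is constant on the connected components of level sets of $\psi$. The main obstacle is that $\psi$ is merely $W^{1,3}$ and not Lipschitz. We would first try to show $\det D^2\psi=0$ in the very weak sense, with the $L^3$ bound making the Hessian determinant well defined distributionally. We would then use known rigidity for developable/degenerate Monge--Amp\`ere solutions to conclude that $\psi$ is locally a function of one linear variable along segments, so its level sets are straight lines on which $\nabla\psi$ is constant. Two nonparallel lines would intersect. At an intersection point either $\nabla\psi$ vanishes there, so those lines carry zero velocity, or we get a contradiction. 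Hence all nontrivial level lines are parallel, with a common direction $a^\perp$, and $u=g((x,a))a^\perp$.

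\emph{Part (iii).} By (ii), $u=g((x,a))a^\perp$. Then $\|u\|_{L^p(\R^2)}^p=\int_\R|g|^p\,dt\cdot\int_\R ds=\infty$ unless $g=0$ almost everywhere. For $p<3$ we cannot apply (ii) directly, so in that case we would first note that $u\in L^p(\R^2)$ with $p\ge2$. For $p\in[2,3)$ we would attempt a reduction, but this may require an additional argument, for example energy-type identities via scaling: testing \eqref{weakuou} with $X=x\,\varphi(x/R)$ yields $\int|u|^2\varphi(x/R)\,dx=O(\cdot)$ as $R\to\infty$, which forces $\int|u|^2=0$ when $u\in L^2$. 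We would use this as the main route for (iii).
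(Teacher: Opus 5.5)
Your part (i) follows the paper's argument: cut-offs that are functions of the stream function $\psi$ (continuous, being $W^{1,3}_{\loc}$) kill the error term because $D\psi\perp v$. Three adjustments are needed. Use one global cut-off $\prod_i\theta_\eps(\psi-c_i)$ rather than localizing near boundary pieces. Drop the commutator estimate, which is not needed. And state the last step: letting $\eps\to0$ only gives $\int_{\{\psi\notin\{c_1,\dots,c_N\}\}}(v,DX\,v)\,dx=0$, and you conclude because $v=D^\perp\psi=0$ a.e.\ on each level set $\{\psi=c_i\}$.

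\textbf{Part (ii).} The genuine gap is the passage from ``$u$ is constant on connected components of level sets of $\psi$'' to ``level sets are straight lines''. You route this through ``$\det D^2\psi=0$ in the very weak sense'' plus ``known rigidity for degenerate Monge--Amp\`ere''. No such rigidity holds at this regularity. For the very weak equation $\dv\dv(D^\perp\psi\otimes D^\perp\psi)=0$, developability is known only for $\psi\in C^{1,\alpha}$ with $\alpha>2/3$. Convex integration produces non-developable $C^{1,\alpha}$ solutions for small $\alpha$, and these lie in $W^{1,3}_{\loc}$. Nor can you apply Theorem \ref{kirp} to the natural potential $\Phi$ with $\cof D^2\Phi=u\otimes u$: it is only $W^{2,3/2}_{\loc}$, below the $W^{2,2}$ threshold. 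The cone $|x|$ illustrates this: $\det D^2|x|=0$ a.e., yet it is not developable.

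The missing idea is to use the chain rule you already invoke with a \emph{bounded} renormalization. Taking $f(z)=z_j/(1+|z|^2)$ in Theorem \ref{chain} gives $\dv\bigl(\frac{u\otimes u}{1+|u|^2}\bigr)=0$. Hence $\frac{u\otimes u}{1+|u|^2}=\cof D^2\varphi$ with $\varphi\in W^{2,\infty}_{\loc}(\R^2)$ and $\det D^2\varphi=0$ a.e. Theorem \ref{kirp} then gives $D\varphi=g((x,a))a$, so $u\otimes u\parallel a^\perp\otimes a^\perp$, i.e.\ $u\parallel a^\perp$, and $\dv u=0$ makes $u$ a shear flow.

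Alternatively, you can bypass Monge--Amp\`ere with what you already have. By Corollary \ref{lipchar}, on a.e.\ level set every non-point component is a Lipschitz curve with tangent $u/|u|$. Constancy of $u$ on such a component makes it a straight chord. Exhausting $\R^2$ by balls, these chords become full lines; distinct ones are disjoint, hence all parallel. The coarea formula then gives $u\parallel a^\perp$ a.e. As written, however, your argument rests on a rigidity statement that does not hold at this regularity.

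\textbf{Part (iii).} Your argument is incomplete for $p\in(2,3)$, which (ii) does not cover. Your identity $\int|u|^2\varphi(x/R)\,dx=-\frac1R\int(u,x)\bigl(u,(D\varphi)(x/R)\bigr)dx$ only bounds the right-hand side by $C\int_{B_{2R}\setminus B_R}|u|^2$. This tends to $0$ when $u\in L^2(\R^2)$. For $u\in L^p(\R^2)$ with $p>2$, H\"older gives only $o(1)R^{2-4/p}$, which need not vanish.

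What you need is the sharp form of the same identity. Let $\varphi\to\chi_{B_1}$ and use $(u,x)^2\le|u|^2|x|^2$ to get $\int_{B_R}|u|^2=R\int_{\partial B_R}(u,\frac{x}{|x|})^2\le R\int_{\partial B_R}|u|^2$. So $R\mapsto\frac1R\int_{B_R}|u|^2$ is nondecreasing. On the other hand, $\frac1R\int_{B_R}|u|^2\le CR^{1-4/p}\|u\|_{L^p}^2\to0$ for $p<4$, hence $u\equiv0$. This is Lemma \ref{lem:divfree} applied to $A=u\otimes u\in L^{p/2}$. Combined with (ii) for $p\ge3$, it covers all $p\in[2,\infty)$.
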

 
The first statement \eqref{st:1} asserts that, under the boundary conditions of \eqref{sys}, we may reduce to the study of the global case, which is considered in \eqref{st:2}.\ \eqref{st:3} tells us that if instead of $u \in L^p_{\loc}$ we assume the stronger requirement that $u \in L^p(\R^2,\R^2)$, then $u$ needs to vanish identically, with no constraint on the value of $p \ge 2$.\ The proofs of \eqref{st:2}-\eqref{st:3} use as main ingredients a combination of the ideas of \cite{KIRK,Pakzad2004} and of \cite{Alberti2014,Bianchini2016}.\ In particular, we need the chain rule property of transport equations with vector-field in $L^p$, which we show in Appendix \ref{ABCp} using the methods of \cite{Alberti2014,Bianchini2016}.\ A similar result in a harder case was obtained in \cite{GUS}.\ Interestingly, we also obtain that non-vanishing solutions to \eqref{sys} are rigid, without further boundary conditions:

\begin{thmx}[Rigidity for non-vanishing exact solutions]\label{t:c}
	Let $\Omega$ be a domain.\ Let $u \in L_{\loc}^3(\Omega,\R^2)$ be a solution to system \eqref{sys0} with $u \neq 0$ a.e.. Then, $u$ is a shear flow. If $u$ is allowed to be zero on sets of positive measure, then there exists an example of non-shear flow solving \eqref{sys0}.
\end{thmx}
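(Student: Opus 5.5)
Since $\dv u = 0$ on $\Omega$ (simply connected locally), I would work on small discs $B\subset\Omega$ and write $u = D^\perp\psi$ for a stream function $\psi\in W^{1,3}_{\loc}(B)$. The first step is to turn $\dv(u\otimes u)=0$ into a degenerate Monge-Amp\`ere type statement. Each row of $\dv(u\otimes u)=0$ says $\dv(u_i u)=0$, so locally there are potentials $\phi_1,\phi_2$ with $u_i u = D^\perp\phi_i$. Set $\Phi=(\phi_1,\phi_2)$. Then $D\Phi$ is, up to rotation, the rank-one symmetric matrix $u\otimes u$. In terms of $\psi$, the matrix $u\otimes u$ equals $J D\psi\otimes D\psi J^T$. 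This gives a function $w$ with $D^2 w = D\psi\otimes D\psi$ up to a fixed rotation. In particular $\det D^2 w = 0$ and $D^2 w\ge 0$, so $w$ is convex with $w\in W^{2,3/2}_{\loc}$ and $Dw\in W^{1,3/2}_{\loc}$.

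Second, I would invoke the results on degenerate Monge-Amp\`ere equations in the style of \cite{KIRK,Pakzad2004}: a $W^{2,p}$ convex function with vanishing Hessian determinant is developable. That is, on the open set where $D^2w\neq 0$, the gradient $Dw$ is constant along segments (characteristics) that extend to $\partial B$, and these segments are parallel to $u$. To do this with only $u\in L^3$, I would use the chain rule for transport equations from Appendix \ref{ABCp}. Since $\dv u=0$ and $\dv(u_i u)=0$, each component $u_i$ is transported by $u$, so $u$ is constant along its own trajectories. On the other hand $u\cdot D\psi=0$, so these trajectories are the level sets of $\psi$. The chain rule in $L^3$ lets us say that $u$ is a.e. constant on connected components of level sets of $\psi$, in the spirit of \cite{Alberti2014,Bianchini2016}. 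A trajectory with constant velocity is a straight segment. So almost every component of a level set of $\psi$ is a segment along which $u$ is constant and parallel to the segment.

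Third, I would use the hypothesis $u\neq 0$ a.e. to exclude flat regions. There $D^2 w\neq 0$ a.e., so the whole disc $B$ is foliated (up to a null set) by maximal segments crossing $B$, with $u$ constant and nonzero on each. The step I expect to be the main obstacle is showing that these segments are all parallel.

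For that I would argue as follows. Two non-parallel segments must not cross inside $\Omega$, since $u$ would take two different directions at the crossing. So non-parallel segments force a fan: a one-parameter family of lines meeting outside $B$ or at a point of $\Omega$. I would then use the divergence-free condition. For segments emanating from a common point $x_0\in\Omega$, the flux through small circles around $x_0$ forces $|u|\sim c/r$ with a constant sign of the flux, which contradicts $u\in L^3_{\loc}$ near $x_0$ (indeed $1/r\notin L^2$ near $x_0$). A fan whose vertex is outside $\Omega$ is not excluded locally. So I would extend segments maximally inside $\Omega$ and use connectedness of $\Omega$, together with the local shear structure spreading by uniqueness of the foliation on overlapping discs, to show that the direction field is locally constant and hence globally constant. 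The formula $u=g((x,a))a^\perp$ then follows from $u$ being constant on each line.

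For the counterexample, take $\Omega$ a neighbourhood of two crossing strips, or better a planar domain with a flat region. Let $u=e_1$ on a horizontal strip $\{|x_2|<1\}$, $u=e_2$ on a vertical strip $\{|x_1-5|<1\}$ placed so the strips do not meet inside $\Omega$, and $u=0$ elsewhere, with $\Omega$ chosen as $\R^2$ minus the intersection square. Each piece is a shear flow, and the tangential discontinuities preserve both equations weakly because $u\cdot n=0$ on the strip boundaries. The result is not a global shear flow.
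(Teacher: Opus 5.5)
\textbf{There is a genuine gap, at exactly the step you flag as the main obstacle: showing that the chords are parallel.}

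\textbf{What is sound.} Your reduction works up to that step. Proposition \ref{const} with $\beta=u_i$ and $r=p=3$ makes $u$ constant on the curve components of a.e.\ level set of $\psi$. Since their tangent is $u/|u|$, these components are chords of the disc $B$. The coarea formula together with $u\neq0$ a.e.\ then shows that a.e.\ point of $B$ lies on one of these pairwise disjoint chords. (Your aside that convex $W^{2,p}$ functions with $\det D^2w=0$ are developable is false for $p<2$: $w=|x|\in W^{2,3/2}_{\loc}(\R^2)$ is convex with $\det D^2w=0$ a.e.\ but is not developable at $0$. You do not actually use it.)

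\textbf{Why your parallelism argument fails.} You claim that a fan whose vertex lies outside $\Omega$ is not excluded locally, and you plan to exclude it by extending segments and using connectedness. But if $\Omega$ is a disc and the vertex lies outside it, there is nothing to extend to. So if that configuration were locally admissible, the theorem would simply be false; connectedness cannot help. What rules it out is local: $\dv u=0$ combined with the constancy of $u$ on the chords. Suppose $u=c(\theta)\frac{x-x_0}{|x-x_0|}$ on a fan of rays through $x_0$. On the piece of a sub-sector between arcs of radii $r_1<r_2$, the flux through the rays vanishes, while the flux through the arc of radius $r$ equals $r\int c\,d\theta$. Hence $\int c\,d\theta=0$ on every sub-sector, and $c\equiv0$.

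The same computation handles your vertex-in-$\Omega$ case; so does the plain disjointness of the chords in a disc containing $x_0$. Your explanation there is off: there is no $1/r$ blow-up, because $u$ is constant on each ray. In general, write $u=g\tau$ with $\tau=(\cos\theta,\sin\theta)$ and $g,\theta$ constant along the chords. Then $\tau\cdot D\theta=0$ and $\dv u=-g\,\tau^\perp\cdot D\theta$, so $g\neq0$ forces $D\theta=0$.

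\textbf{What is missing to make this a proof.} The dichotomy ``parallel or fan'' is only heuristic for a measurable family of chords. You need two things.
\begin{enumerate}[(a)]
\item Regularity of the direction field. Disjoint chords of $B_1$ through points of $B_{1/2}$ extend at least $1/2$ on each side, so the angle between the chords through $x$ and $y$ is $O(|x-y|)$. Thus $\theta$ is locally Lipschitz.
\item A change of variables to coordinates $(\sigma,s)$ adapted to the chords. The Jacobian is affine in $s$ with slope $\pm\theta'(\sigma)$, so testing $\dv u=0$ yields $g\,\theta'=0$ a.e.
\end{enumerate}
With these, your route would be a legitimate, more geometric alternative.

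\textbf{How the paper proceeds.} It avoids the geometry of the foliation entirely. It applies Theorem \ref{chain} with $f(z)=z\otimes z/(\eps+|z|^2)$ and lets $\eps\to0$; this is where $u\neq0$ a.e.\ is used. This gives $\dv(u\otimes u/|u|^2)=0$, hence a potential $\varphi\in W^{2,\infty}$ with $D^2\varphi\in\{x\otimes x:x\in\mathbb{S}^1\}$, an elliptic curve. The rigidity lemma \cite[Lemma 2.1]{Lamy2023} then makes $u\otimes u/|u|^2$ constant.

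\textbf{Counterexample.} Yours, on $\R^2$ minus the closed intersection square of the two strips, is correct. The paper's example lives instead on the simply connected strip $(0,1)\times\R$, with $u$ vanishing on a wedge.
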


Concerning flexibility of exact solutions, namely the sharpness of the exponent $p = 3$ in Theorem \ref{t:b}, we leave the following:

\begin{conx}\label{conj}
	If $p < 3$, there exists a non-zero $L^p(B_1,\R^2)$ solution to \eqref{sys} in $\Omega = B_1$.
\end{conx}

We believe the question should have an affirmative answer, and a possible way to show it is to use the convex integration methods of Section \ref{sec:te}. However, we were not able to complete the construction, and hence we leave it as an open question.

\subsection{Approximate solutions}

Alongside \eqref{sys0}-\eqref{sys}, we also consider \emph{approximate} solutions of it, i.e.\, given $p \ge 2$, we study limits of sequences $(u_j)_j$ weakly converging to $u \in L^p(B_1,\R^2)$ solving
\begin{equation}\label{e:app}
	\begin{cases}
		\dv(u_j \otimes u_j) = \dv(X_j),\\
		\dv(u_j) = 0,\\
		\sup_j\|X_j\|_{L^{\frac{p}{2}}} < +\infty,\; X_j \to 0 \text{ in }L^1(\Omega).
	\end{cases}
\end{equation} 
If $X_j = p_j\id$, we can interpret $u_j$ as solutions to stationary incompressible Euler equations with \emph{vanishing} pressure.\ The classical tool to study problem \eqref{e:app} is provided by Young measures, that we briefly recall in Subsection \ref{sec:YM}, and we refer the reader to \cite[Section 3]{DMU} for a more complete account on the subject.\ For the moment, we will trade formalism for clarity in exposition, and we will simply assume that the sequence
\begin{equation}\label{e:AAj}
M_j = \left(\begin{array}{c}u_j \\ u_j\otimes u_j - X_j \end{array}\right) \text{generates the Young Measure } \nu = (\nu_x)_x.
\end{equation}
Such probability measures $\nu_x \in \mathcal{P}(\R^{3\times 2})$ are supported in the inclusion set:
\begin{equation}\label{e:diffinc}
	\spt(\nu_x) \subset K\doteq \left\{\left(\begin{array}{cc}y \\ y\otimes y\end{array}\right): y \in \R^2\right\}.
\end{equation}

The same rigidity question raised in the previous section about exact solutions can be raised here in the case of approximate solutions. This translates in additional properties of the support of the Young measure $\nu$. For a shear flow $u(x) = g((x,a))a^\perp$, we have
\[
\left(\begin{array}{cc}u(x) \\ u(x)\otimes u(x)\end{array}\right) = \left(\begin{array}{cc}g((x,a))a \\ g((x,a))^2a\otimes a\end{array}\right), \text{ i.e. } \left(\begin{array}{cc}u(x) \\ u(x)\otimes u(x)\end{array}\right) \in K_a = \left\{\left(\begin{array}{cc}ta \\ t^2a\otimes a\end{array}\right): t \in \R \right\} \text{ a.e.}.
\]
This is the same rigidity we expect from gradient Young measures:

\begin{thmx}[Approximate rigidity for $p > 4$]\label{t:d}
	Let $p > 4$ and $(u_j)_j$, $(X_j)_j$ be sequences of solutions to \eqref{e:app} with $\sup_j\|u_j\|_{L^p(B_1,\R^2)}+ \sup_j\|X_j\|_{L^{p/2}(B_1,\R^{2\times 2})} < + \infty$.\ 	Assume $(M_j)_j$ defined in \eqref{e:AAj} generates the Young measure $\nu = (\nu_x)_{x \in B_1}$.\ Then, there exists $b(x) \in \R^2$ such that $\spt(\nu_x) \subset K_{b(x)}$ for a.e. $x \in B_1$.
\end{thmx}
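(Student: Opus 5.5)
We follow the Tartar/div-curl approach for Young measures generated by sequences satisfying linear differential constraints, with the constraint written as a first-order system for $M_j$. Write $M_j = (u_j, A_j)$ with $A_j = u_j\otimes u_j - X_j$. Then $\dv u_j = 0$ and $\dv A_j = 0$, the latter because $\dv(u_j\otimes u_j) = \dv X_j$. In two dimensions this means $u_j = D^\perp\psi_j$ and that the rows of $A_j$ are rotated gradients. The constraint is therefore a row-wise divergence-free condition on the $3\times 2$ matrix $M_j$, and $\nu_x$ is a Young measure generated by divergence-free fields.

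The first key step is the div-curl (compensated compactness) lemma applied to pairs of rows. For rows $V_j,W_j$ of $M_j$, $\dv V_j = 0$ and $W_j^\perp$ is curl-free, so $V_j\cdot W_j^\perp$ converges weakly (in distributions) to the product of the limits. This gives commutation relations $\langle \nu_x, m_k\cdot m_l^\perp\rangle = \bar m_k\cdot\bar m_l^\perp$ for the $2\times 2$ minors of the $3\times 2$ matrix. The integrability hypotheses enter here: $u_j\in L^p$ and $A_j \in L^{p/2}$. Pairing $u$ with a row of $A$ needs $\frac1p+\frac2p<1$, i.e.\ $p>3$. Pairing two rows of $A$ needs $\frac4p<1$, i.e.\ $p>4$, and this is exactly where $p>4$ is used. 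It also guarantees equi-integrability, so the minors of $M_j$ converge to $\langle\nu_x,\cdot\rangle$ and no concentration is lost. Since $X_j\to 0$ in $L^1$ and is bounded in $L^{p/2}$ with $p/2>1$, $X_j\to 0$ strongly in $L^q$ for $q<p/2$, and the minors may be computed on $K$. This also confirms $\spt\nu_x\subset K$.

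The second step is algebraic. On $K$ the point is $m(y) = (y, y\otimes y)$, with rows $y$, $y_1 y$, $y_2 y$. Any minor between two rows of $m(y)$ vanishes, since all rows are parallel to $y$. Hence for a.e.\ $x$, writing $\mu = \nu_x$ as a measure in $y$ and $\bar u = \int y\,d\mu$, $\bar A = \int y\otimes y\,d\mu$, the rows of the barycenter are pairwise parallel. That is, $\bar u \parallel \bar A_1 \parallel \bar A_2$ (zero minors), where $\bar A_i = \int y_i y\,d\mu$. The matrix $\bar A = \int y\otimes y\,d\mu$ is positive semidefinite with rank at most one, because its two rows are parallel. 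Hence $\int (y,e)^2 d\mu = 0$ for some unit $e$ in its kernel, so $\mu$ is supported on the line $\R e^\perp$. Taking $b(x) = e^\perp$ gives $\spt\nu_x\subset K_{b(x)}$. If $\bar A=0$ then $\mu=\delta_0$ and any $b$ works. Strictly, only the $A$--$A$ minor $\det\bar A = \int\!\!\int$-type relation is needed: $\det \bar A = \langle\nu_x,\det(y\otimes y)\rangle = 0$.

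The main obstacle is rigorously justifying the commutation $\det \bar A = \langle \nu_x, \det A\rangle$ at the borderline integrability. $A_j$ is only bounded in $L^{p/2}$, with $p/2>2$ needed so that $\det A_j$ is equi-integrable (in $L^{p/4}$, $p/4>1$). I would prove this by writing the rows of $A_j$ as $D^\perp\phi^i_j$ with $\phi_j^i\in W^{1,p/2}$, using the standard weak continuity of Jacobians $\det D\phi_j \rightharpoonup \det D\phi$ for $p/2>2$. Combined with Young measure representation of equi-integrable nonlinear quantities, this identifies the weak limit with $\int \det(y\otimes y - \cdot)$. One must also handle the contribution of $X_j$ via its strong $L^{q}$ convergence to zero, with Hölder on the cross terms. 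The rest is the elementary positive-semidefiniteness argument above.
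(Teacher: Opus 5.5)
Your proposal is correct and follows the paper's argument. The only compensated-compactness input actually needed is the weak continuity of $\det$ on the $W^{1,p/2}$ potentials of the last two rows, which is where $p>4$ enters. Together with $\spt\nu_x\subset K$, it gives $\det \bar A = \langle \nu_x, \det \pi_2\rangle = 0$, and then testing the rank-one positive semidefinite barycenter against its kernel direction forces $\nu_x$ onto a single $K_{b(x)}$. The extra minors involving $u$ and the separate treatment of the $X_j$ cross terms are harmless but unnecessary, since the Young-measure representation already accounts for them once $\spt\nu_x\subset K$ is known.
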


The previous theorem holds if $p > 4$, and this is sharp: 

\begin{thmx}[Approximate flexibility for $p < 4$]\label{t:e}
	There exist sequences of solutions $(u_j)_j$, $(X_j)_j$ to \eqref{e:app} with $\sup_j\|u_j\|_{L^p(B_1,\R^2)} + \sup_j\|X_j\|_{L^{p/2}(B_1,\R^{2\times 2})} < + \infty$ for all $p < 4$ such that the sequence $(M_j)_j$ generates a (homogeneous) Young measure $\nu$ with $\spt{\nu} = K_{e_1}\cup K_{e_2}$, where $e_1$ and $e_2$ form the canonical basis of $\R^2$.
\end{thmx}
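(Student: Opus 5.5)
Since $\spt(\nu)=K_{e_1}\cup K_{e_2}$ has to be produced, I would use a convex integration / laminate construction in the space of $3\times 2$ matrices (in the spirit of \cite{LINF,Choffrut2014}). The target Young measure is a laminate supported on the two parabolas $K_{e_1}$ and $K_{e_2}$, with heavy tails $\sim t^{-5}\,dt$, i.e.\ with density behaving like $|t|^{-5}$ as $|t|\to\infty$. This gives finite $p$-th moments in $y=ta$ for every $p<4$ but an infinite $4$th moment. The lost integrability is exactly what Theorem \ref{t:d} forbids. The linear constraint behind \eqref{e:app} is that $M=(u,\,u\otimes u - X)$ satisfies $\dv u=0$ and $\dv(u\otimes u - X)=0$. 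The associated wave cone therefore consists of matrices $\binom{v}{V}$ such that $v\perp\xi$ and $V\xi=0$ for some $\xi\neq 0$. For $a=e_1$ one checks that $\binom{te_1}{t^2 e_1\otimes e_1}$ and $\binom{se_1}{s^2e_1\otimes e_1}$ differ by a wave-cone element with $\xi=e_2$. So each $K_{e_i}$ is itself contained in a rank-one-type line of the cone. Consequently any probability measure on $K_{e_1}$ (resp.\ $K_{e_2}$) with the right barycenter is a legitimate laminate, realized by one-dimensional oscillations in direction $e_2$ (resp.\ $e_1$). The key point is that the cone allows mixing $K_{e_1}$ and $K_{e_2}$ only if we enlarge the state space to include $X$.

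\textbf{Step 1: an infinite-order staircase laminate.} Starting at $0$, I split alternately along $K_{e_1}$-type and $K_{e_2}$-type directions. At each stage the point lies at the matrix $\binom{0}{\lambda_k\id}$-type center: $u$ has zero barycenter, and the matrix part has barycenter $\frac12 t_k^2(e_1\otimes e_1)+\dots$. This is compensated by the $X$ term, because \eqref{e:app} only asks that $M_j$ satisfy the divergence constraints with an error $X_j$ in $L^{p/2}$ that tends to $0$ in $L^1$. Concretely, at step $k$ a fraction of mass $\sim 2^{-k}$ is placed at the points $\pm t_k e_i$ of $K_{e_i}$, with $t_k\sim 2^{k/4}$. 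The remaining mass continues to an $X$-part of size $\sim t_k^2$, concentrated on a set of measure $\sim 2^{-k}$. The resulting $L^{p/2}$ norm of the error is $\sum_k 2^{-k}t_k^{p}<\infty$ iff $p<4$, and the same sum controls $\|u\|_{L^p}$. I will need to tune the exponents so that both series converge for every $p<4$ simultaneously. That forces $t_k^4 2^{-k}\to\infty$ only logarithmically, e.g.\ $t_k=2^{k/4}k^{-\epsilon}$; otherwise one would only get $p<4-\delta$.

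\textbf{Step 2: realizing the laminate by sequences.} Each splitting along a wave-cone direction is realized by a standard simple-laminate construction. Take one-dimensional profiles $h(x\cdot\xi/\epsilon)$ cut off by potentials: use $u=\nabla^\perp\psi$ and $u\otimes u-X=$ a second-order curl–curl potential. The cutoff errors are absorbed into $X_j$, and since they live on thin boundary layers they are small in $L^1$. Iterating finitely many steps and then passing to a diagonal subsequence gives $(u_j,X_j)$ whose Young measure is the homogeneous measure from Step 1. It is supported in $K$ and has full support $K_{e_1}\cup K_{e_2}$ once the values $t_k$ are densified, for instance by also using a continuum of intermediate amplitudes at each step. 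The remaining $X_j$ then tends to $0$ in $L^1$, because its mass is pushed to infinity on vanishing sets while staying bounded in $L^{p/2}$.

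\textbf{Main obstacle.} I expect the hardest part to be the uniform estimate in Step 1–2. I must check that the error $X_j$ arising from the cutoffs and from the unresolved "remaining mass" has $L^{p/2}$ norms summing geometrically for all $p<4$ at once, while also tending to $0$ in $L^1$. The delicate point is the borderline exponent. I would first try the explicit staircase with $t_k^4\sim 2^k/k^2$ and verify that $\sum_k 2^{-k}t_k^p$ and the corresponding cutoff errors converge for each $p<4$.
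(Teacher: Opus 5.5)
Your architecture matches the paper's. You use a Faraco staircase whose successive centres are $\binom{0}{\lambda\id}$, sending mass $\sim t^{-4}$ at each step to $\binom{\pm t e_i}{t^2 e_i\otimes e_i}\in K_{e_i}$, realized by finite-order laminates and a diagonal argument. Your Step 2 is exactly what Proposition \ref{ind} and the converse part of Lemma \ref{lem:tec} provide, so no hand-made cut-offs or second-order potentials are needed.

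\textbf{The missing step.} You never write down the one thing that constitutes the proof: a laminate of finite order with barycentre $\binom{0}{a_n\id}$ supported in $K_{e_1}\cup K_{e_2}\cup\{\binom{0}{a_{n+1}\id}\}$. Saying you ``split alternately along $K_{e_1}$- and $K_{e_2}$-type directions'' shows neither that such a splitting exists nor what its weights are. The paper's step (Lemma \ref{lem:Zn}) is:
\begin{itemize}
\item split $\binom{0}{a_n\id}$ along $\binom{0}{e_2\otimes e_2}$ into $\binom{0}{a_n e_1\otimes e_1}$ and $\binom{0}{a_ne_1\otimes e_1+a_{n+1}e_2\otimes e_2}$, with weights $1-\frac{a_n}{a_{n+1}}$ and $\frac{a_n}{a_{n+1}}$;
\item split the first along $\binom{e_1}{0}$ onto $K_{e_1}$;
\item split the second along $\binom{0}{e_1\otimes e_1}$ into $\binom{0}{a_{n+1}e_2\otimes e_2}$ and $\binom{0}{a_{n+1}\id}$;
\item split $\binom{0}{a_{n+1}e_2\otimes e_2}$ along $\binom{e_2}{0}$ onto $K_{e_2}$.
\end{itemize}
The leftover mass after $n$ steps is $(a_1/a_{n+1})^2$. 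With $a_n=2^n$ the moment bounds become the geometric sums \eqref{e:seq1}.

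\textbf{Errors in the quantitative discussion.}
\begin{itemize}
\item You cannot start at $0$. On $K$ the second block is $y\otimes y\ge 0$, so a probability measure on $K$ with barycentre $0$ is $\delta_0$. The start must be $\binom{0}{\lambda_0\id}$ with $\lambda_0>0$. This barycentre is the weak limit of $u_j\otimes u_j$ produced by the oscillations; it is not ``compensated'' by $X_j$, which tends to $0$ in $L^1$.
\item The ``borderline'' obstacle does not exist. The theorem only requires, for each fixed $p<4$, a bound on the norms of one fixed sequence. With masses $2^{-k}$ at amplitude $t_k=2^{k/4}$, the series $\sum_k 2^{k(p/4-1)}$ converges for every $p<4$, so no logarithmic correction is needed.
\item Your proposed fix is impossible, because masses and amplitudes cannot be tuned independently. $\det\pi_2$ is affine along every wave-cone direction, since the matrix block of such a direction has rank at most one, and it vanishes on $K$. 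So at each finite stage $\int\det\pi_2\,d\nu_n=\lambda_0^2$ is carried entirely by the leftover atom $\binom{0}{\lambda_k\id}$ of mass $m_k$. Hence $m_k\lambda_k^2=\lambda_0^2$, and $m_k t_k^4$ is pinned to a constant. Your choice $t_k=2^{k/4}k^{-\epsilon}$ with masses $2^{-k}$ violates this, and in fact gives $t_k^42^{-k}\to0$, not $\to\infty$. This constraint is exactly why the fourth moment must blow up, in line with Theorem \ref{t:d}.
\end{itemize}

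\textbf{On full support.} The paper's laminate is supported on a discrete subset of $K_{e_1}\cup K_{e_2}$, and what it actually verifies is \eqref{item:e}. Your densification idea would upgrade this to full support: replace the two-point splits onto $K_{e_i}$ by symmetric measures on $K_{e_i}$ with the same barycentre, which are realizable by one-dimensional oscillations. It has to be carried out inside the staircase with the same tail bounds.
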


Combining Theorems \ref{t:b}-\ref{t:e} we obtain some interesting remarks.\ Theorem \ref{t:e} is shown by constructing a \emph{staircase laminate}, a convex integration technique introduced by D. Faraco in \cite{MIL}, and since then used in many works \cite{AFSZ,FMCO,CFM,CFMM,CT,JOHA}.\ This method is very useful to construct pathological solutions to PDEs exhibiting \emph{concentration phenomena}.\ It typically consists of three steps:
\begin{enumerate}
	\item rewriting the system of PDEs at hand as a differential inclusion of the form $Dv \in \tilde K \subset \R^{n\times m}$;\label{i1}
	\item finding a staircase laminate with the required integrability properties supported in the inclusion set $\tilde K$ to obtain a sequence of approximate solutions.\ This generally introduces errors and one does not obtain an exact solution in this step;\label{i2}
	\item finding an \emph{in-approximation} to $\tilde K$ to eliminate such errors and eventually construct an exact solution with the required (pathological) properties.\label{i3}
\end{enumerate}

We refer the reader to Section \ref{sec:te} for details on the terminology we use. Combining Theorems \ref{t:b}-\ref{t:e} we find that $L^p_{\loc}$ exact solutions to \eqref{sys0} are rigid for $p \ge 3$, while approximate solutions need not be in the range $p \in [3,4)$. In particular in this range Step \eqref{i3} fails, and thus this problem provides an interesting example of differential inclusion for which approximate flexibility holds while no in-approximation is available. This can be explained by noticing that the methods used to show Theorem \ref{t:b} fail to treat right-hand sides belonging to a negative Sobolev space, see \eqref{e:app} (while they work for measure right-hand sides, see \cite{Bianchini2016}). In particular, a very general method of solving Step \eqref{i3} once Step \eqref{i2} is achieved was recently introduced in \cite{Kleiner2024}, see also the very recent extension \cite{Buchowiec2026}.\ Roughly, the idea of \cite{Kleiner2024} is to show that \eqref{i3} can always be achieved provided one can find enough laminates as in Step \eqref{i2} to \emph{reduce the inclusion set to $\R^{n\times m}$}.\ The set $K$ we consider here, see \eqref{e:diffinc}, is an example of a set that cannot be reduced to $\R^{3\times 2}$ if one looks at $L^p$ solutions for $p \in [3,4)$.

\subsection{Structure of the paper}

We start by explaining the notation we use in this paper in Section \ref{sec:not}.\ Next, in Section \ref{sec:ta} we show Theorem \ref{t:b} and in Section \ref{subs:rig} we show Theorem \ref{t:c}.\ The results on Young measures and approximate solutions, namely Theorems \ref{t:d} and \ref{t:e}, are contained in Sections \ref{sec:apprig} and \ref{sec:te} respectively.\ In Appendix \ref{ABCp}, we show a version of \cite{Alberti2014} valid for $L^p$ vector fields.

\subsection*{Aknowledgements}
The author thanks T. Drivas and P. Isett for posing this question and discussing with him the literature, their results on the time-dependent version of the equation considered here and his manuscript.\ He also wishes to thank A. Guerra and L. De Rosa for reading parts of this manuscript and for useful discussions about it. 

%

\subsection{Notation}\label{sec:not}
 
For $E \subset \R^n$ we denote by $\overline{E}$ its closure, by $\partial E$ its topological boundary, and by $E^c$ its complement in $\R^n$.\ We call $E$ a \emph{domain} if $E$ is open and connected.\ For two sets $A,B$, we denote by $\dist(A,B)$ the distance between them.\ The open ball of radius $r$ centered at $Y$ is denoted by $B_r(Y)$.\ If $Y = 0$, we will simply write $B_r$.\

We let $\mathcal{M}(\R^m)$ be the space of finite and positive measures on $\R^m$ and $\mathcal{P}(\R^m)$ be the space of probability measure.\ $\mathcal{L}^m$ denotes the Lebesgue measure on $\R^m$ and for each $\mu \in \mathcal{M}(\R^{m})$ and $f \in L^1(\R^m;\mu)$, we set
\[
\langle\mu,f\rangle \doteq \int_{\R^{m}}f(y)d\mu(y).
\] 
 
We write $e_i$ for the $i$-th vector of the canonical basis of $\R^2$, $e_1 = (1,0)$ and $e_2 = (0,1)$. For a matrix $A \in \R^{n\times m}$, $\det(A)$, $A^T$ and $|A|$ denote the determinant (if $n = m$), the transpose and the Euclidean norm of the matrix $A$, respectively.\ The (standard) scalar product between matrices is denoted by $\langle A,B\rangle$, while for vectors $a,b$ we use $(a,b)$.\ The cofactor matrix is
\begin{equation}\label{eq:A}
	\cof(A) \doteq \left(\begin{array}{cc} d & -b \\ -c & a\end{array}\right), \quad \text{ if } A = \left(\begin{array}{cc} a & b \\ c & d\end{array}\right),\quad
	\text{so that }\cof(A)A = A\cof(A)= \det(A)\id\,.
\end{equation}

$\Sym(n)$ denotes the space of symmetric matrices of size $n$, and $\Sym^+(n)$ is the set of matrices $X \in \Sym(n)$ such that $(Xv,v) \ge 0$ for all $v \in \R^n$.\ We denote by $J$ the matrix representing a rotation of ninety degrees:
\begin{equation}\label{e:J}
	J \doteq \left(\begin{array}{cc}0 & 1 \\ -1 & 0\end{array}\right).
\end{equation}
As a shorthand notation, we set $v^\perp \doteq Jv, \forall v \in \R^2$.\ Finally, for a matrix $X \in \R^{3\times 2}$, we denote by $\pi_1(X)$ the projection on the first row of $X$ and by $\pi_2(X)$ the projection on the last two rows of $X$.

	\section{Proof of Theorem \ref{t:b}}\label{sec:ta}
	
	This section is divided into the three Subsections \ref{sec:st:1}-\ref{sec:st:2}-\ref{sec:st:3} in which we show Theorem \ref{t:b}\eqref{st:1}-\eqref{st:2}-\eqref{st:3} respectively.
	
	\subsection{Proof of Theorem \ref{t:b}\eqref{st:1}: reduction to the global case}\label{sec:st:1}
	We need to show that 
	\begin{equation}\label{eq:distrib}
	\dv(\chi_\Omega u\otimes u) = 0 \text{ in the sense of distributions in $\R^2$}.
	\end{equation}
 
	To this end, we let $v = \chi_\Omega u$. By \eqref{weak} this is a divergence-free, $L_{\loc}^3(\R^2,\R^2)$ vector field. Thus, we find 
		\begin{equation}\label{uv}
			v = D^\perp \psi \text{ a.e. in $\R^2$}, \quad \text{for $\psi \in W_{\loc}^{1,3}(\R^2)$.}
		\end{equation}
 
		Note that $\psi$ is locally constant outside $\overline{\Omega}$. As, by assumption, $(\overline{\Omega})^c = \Omega_1\cup\dots\cup \Omega_N$ where $\Omega_i$ is open and connected, we have $\psi \equiv a_i$ on $\Omega_i$.\ Since $\psi$ is continuous and $\partial \Omega = \partial((\overline{\Omega})^c)$ we find $\psi(\partial\Omega) \subset \{a_1,\dots, a_N\}$.\ Hence, letting $\varphi \in C^\infty_c(\R^2,\R^2)$ and $f \in \Lip\cap L^\infty(\R)$ with $f(a_i) = 0$ for every $i$, we infer:
		\[
		X \doteq f(\psi) \varphi \in W^{1,3}_0(\Omega,\R^2).
		\]
		A straightforward approximation argument shows that $X$ is a valid test function for \eqref{weakuou}. We obtain:
		\[
		0 = \int_{\Omega}(u,DX u)\; dx = \int_{\Omega}(u,D(f(\psi) \varphi) u)\; dx = \int_{\Omega}f(\psi)(u,D \varphi u)\; dx + \int_{\Omega}f'(\psi)(u, [\varphi \otimes D\psi] u)\; dx.
		\]
		Due to \eqref{uv}, we find that
		\[
		(u, [\varphi \otimes D\psi] u) = (u,\varphi)(u,D\psi) = 0 \text{ a.e. in }\Omega.
		\]
		We just showed that, for $f \in \Lip(\R)$ with $f(a_i) = 0$ for all $i$,
		\begin{equation}\label{final}
			\int_{\Omega}f(\psi)(u,D \varphi u)\; dx = 0.
		\end{equation}
		Consider, for any $\eps > 0$,
		\[
		g_\eps(t) \doteq \frac{|t|}{\sqrt{t^2 + \eps}}.
		\]
		Then $g_\eps$ is a Lipschitz function on $\R$ with $g_\eps(0) = 0$. We further observe that
		\begin{equation}\label{a}
			|g_\eps(t)| \le 1 \text{ for all }t \in \R \text{ and } g_{\eps}(t) \to \chi_{\{s \neq 0\}}(t) \text{ a.e.}.
		\end{equation}
		Using $f_\eps(t) \doteq \prod_ig_\eps(t-a_i)$ instead of $f(t)$ in \eqref{final}, \eqref{a} and the dominated convergence theorem yield:
		\[
		\int_{\{\psi \neq a_1,\dots,a_N\}}(u,D \varphi u)\; dx = 0.
		\]
		Since, for any $a$, $u = 0$ a.e.\ on the set $\{\psi = a\}$, the previous equality implies the required property \eqref{eq:distrib}. \qed

\begin{remark}
We do not know if the (quite mild) topological assumptions of Theorem \ref{t:b}\eqref{st:1} can be relaxed.\ Notice that similar computations appeared in \cite[Section 2.4]{DelNin2024}, where the assumption was similar to ours.
\end{remark}

	\subsection{Preliminaries and proof of Theorem \ref{t:b}\eqref{st:2}}\label{sec:st:2}
	
	As said in the introduction, the statement follows from a combination of the results of \cite{KIRK, Pakzad2004} and \cite{Alberti2014,Bianchini2016}. In particular, we need the following version of \cite{Alberti2014}:
	
	\begin{theorem}\label{chain}
		Let $\Omega \subset \R^2$ be an open set. Let $\beta_i \in L_{\loc}^r(\Omega)$ for $i =1,\dots, N$ and $u \in L_{\loc}^p(\Omega)$, for $r,p \in [1,+\infty]$. Assume
		\begin{equation}\label{bounds}
			\frac{1}{r} + \frac{2}{p} \le 1
		\end{equation}
		and, in the sense of distributions in $\Omega$:
		\begin{align}
			\dv(u) &= 0,\label{p1}\\
			\dv(\beta_i u) &= 0,\quad \forall i.\label{p2}
		\end{align}
		Then, for all $f \in C^0(\R^N)$ with $|f(x)| \le C(1 + |x|) \text{ for some $C > 0$ for all $x \in \R^N$}$, it holds
		\begin{equation}\label{cons}
			\dv(f(\beta_1,\dots, \beta_N) u) = 0.
		\end{equation}
	\end{theorem}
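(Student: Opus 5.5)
The plan is to follow the Alberti--Bianchini--Crippa strategy for two-dimensional divergence-free fields and adapt it to $L^p$ fields. The argument is local, so we work on a ball $B\Subset\Omega$.

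\textbf{Stream function.} Since $\dv u=0$, we write $u=D^\perp\psi$ with $\psi\in W^{1,p}(B)$.

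\textbf{Reduction to regular $f$.} It suffices to prove \eqref{cons} for $f\in C^1$ with bounded gradient. A general continuous $f$ of linear growth is a locally uniform limit of such functions $f_k$ with $|f_k(x)|\le C(1+|x|)$. Then $f_k(\beta)u\to f(\beta)u$ in $L^1_{\loc}$ by dominated convergence, because $(1+|\beta|)|u|\in L^1_{\loc}$: the exponents satisfy $\frac1r+\frac1p\le 1$, which follows from \eqref{bounds}.

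\textbf{Localization on level sets.} The key step is to show that each $\beta_i$ is constant on a.e.\ connected component of a.e.\ level set of $\psi$. We use the coarea formula. For a.e.\ $t$, the set $\psi^{-1}(t)\cap\{D\psi\neq0\}$ is, up to $\mathcal H^1$-null sets, a countable union of rectifiable curves, and $u$ is tangent to these curves. Fix a test function $\eta$ and test \eqref{p2} with $\eta\, g(\psi)$. Since $(u,D\psi)=0$, this gives
\[
\int g(\psi)\,\beta_i\,(u,D\eta)\,dx=0 \quad\text{for all } g .
\]
Disintegrating via coarea, this yields for a.e.\ $t$ that $\partial_\tau\beta_i=0$ along the level curves, in the distributional sense on the curves with weight $|D\psi|$. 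In other words, the one-dimensional measure $\beta_i\,|D\psi|\,\tau\,\mathcal H^1\llcorner\psi^{-1}(t)$ is divergence free on each curve. This makes $\beta_i$ constant on connected components (``weak Sard'' issues aside).

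\textbf{Where the integrability enters.} The justification of this test requires $g(\psi)\eta$ to be admissible, and $\beta_iu\,D(g(\psi)\eta)$ to be integrable. The product $g'(\psi)\beta_i\,(u,D\psi)$ is formally zero, but it is controlled by $|\beta||u||D\psi|=|\beta||u|^2$. This is in $L^1$ exactly when $\frac1r+\frac2p\le1$, which is \eqref{bounds}. That integrability lets one regularize $g$ and $\psi$ and pass to the limit.

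\textbf{Conclusion.} Once every $\beta_i$ is constant on the level curves, so is $f(\beta_1,\dots,\beta_N)$. The characterization then runs in reverse: a bounded-gradient $f$ keeps $f(\beta)$ in $L^r$, and a function constant along the trajectories on each component satisfies the disintegrated identity. Integrating back in $t$ gives \eqref{cons}.

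\textbf{Main obstacle.} The hardest part is the weak Sard property and the structure of level sets for a merely $W^{1,p}$ stream function. We must show that $D\psi=0$ $\mathcal L^2$-a.e.\ on $\psi^{-1}(S)$, where $S$ is the set of critical values, or bypass this. On the critical set $u=0$, so that region contributes nothing to $f(\beta)u$. The delicate point is to guarantee that the disintegration captures all of $\{u\neq0\}$. For $p\ge2$, I would handle this by approximating $\psi$ with continuous representatives, since $W^{1,p}$ with $p>2$ embeds in $C^0$. For $p=2$, it is $\frac1r+\frac2p\le1$ that forces $r=\infty$. One would then use the monotone/Lipschitz-function decomposition of Bianchini--Gusev, in which one works directly with the measures $\mu_t=|D\psi|\mathcal H^1\llcorner\psi^{-1}(t)$ and the divergence-free condition on each $\mu_t$.
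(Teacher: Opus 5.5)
Your architecture is the paper's: prove Proposition \ref{const} for each $\beta_i$, observe that $f(\beta_1,\dots,\beta_N)$ is then constant on the same components, and run the characterization backwards. You also locate the use of \eqref{bounds} exactly where the paper does, namely $|\beta||u||D\psi|=|\beta||u|^2\in L^1$ controlling the error when testing with $g(\psi)\eta$. Deferring $p=2$, $r=\infty$ to Bianchini--Gusev matches the paper too.

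The one formula you write at the crux is wrong, however. The coarea formula absorbs the factor $|D\psi|$:
\[
\int g(\psi)\,\beta_i\,(u,D\eta)\,dx=\int_{\R}g(t)\int_{\psi^{-1}(t)}\beta_i\,(\tau,D\eta)\,d\mathcal H^1\,dt,\qquad \tau=\frac{D^\perp\psi}{|D^\perp\psi|}.
\]
So the divergence-free measure is $\beta_i\tau\mathcal H^1\llcorner\psi^{-1}(t)$, not $\beta_i|D\psi|\tau\mathcal H^1\llcorner\psi^{-1}(t)$; the same error appears in your $\mu_t$. With your weight you would only get $\beta_i|D\psi|$ constant along curves. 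That gives neither constancy of $\beta_i$ nor anything about $f(\beta)$.

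More seriously, the steps that carry the content are asserted, while the obstacle you single out is not one. Weak Sard plays no role in the stationary problem. Coarea only sees $\{D\psi\neq0\}$, and $\beta u$, $f(\beta)u$ vanish on the critical set, so ``capturing all of $\{u\neq0\}$'' is automatic. What you actually need is the following.

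(a) Rectifiability of a.e. level set says nothing about its connected components. You need each component to be a point or a Lipschitz Jordan curve. The paper gets this from Theorem \ref{char}, the parametrization lemma of \cite{ABCa}, and the exclusion of open arcs in Step 5 of Corollary \ref{lipcharcomp}.

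(b) Test functions $g(\psi)\eta$ only see whole level sets, and components may accumulate, so localizing the divergence-free condition to a single component needs an extra idea. The paper writes each component as $\bigcap_n\overline{U_n}$ with $\partial U_n\cap\psi^{-1}(t)=\emptyset$ and tests with mollified $\chi_{U_n}\eta$, giving \eqref{oncc}.

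(c) From $\int_a^b\beta(\gamma)(\gamma',D\eta(\gamma))\,ds=0$ you get $\beta\circ\gamma$ constant only after two further steps. First show that the sign between $\gamma'$ and $\tau$ is constant. Then apply a one-dimensional lemma, \cite[Proposition 2.17]{GUS}, because functions $\eta\circ\gamma$ are not obviously rich enough when $\gamma$ is not bi-Lipschitz.

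(d) For the reverse direction, the union $B_t$ of point components must be $\mathcal H^1$-null. Otherwise $\int_{B_t}f(\beta)(\tau,D\eta)\,d\mathcal H^1$ is uncontrolled, since $f(\beta)$ is arbitrary there. The paper proves this in Step 6 of Corollary \ref{lipcharcomp}, using small sets whose boundaries avoid the level set, $BV$ potentials and the Sobolev inequality.

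Without (a)--(d), your sentences ``this makes $\beta_i$ constant on connected components'' and ``a function constant on each component satisfies the disintegrated identity'' are precisely the unproved part of the theorem.
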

	
	A proof of this theorem will be given in the Appendix, Section \ref{ABCp}.\ For the convenience of the reader, we also recall the results of \cite{KIRK, Pakzad2004}:
	
	\begin{theorem}\label{kirp}
		Let $\Omega \subset \R^2$ be open.\ Let $f \in W^{1,2}_{\loc}(\Omega,\R^2)$ be a map with $Df \in \{X \in \Sym(2):\det(X) = 0\}$ for a.e. $x \in \Omega$. Then, for every $x \in \Omega$, there exists either a neighborhood $U$ of $x$, or a segment passing through $x$ and joining $\partial \Omega$ at its both ends, on which $f$ is constant.\ In particular, if $\Omega = \R^2$, then there exist $a \in \R^2$ and $g \in W^{1,2}_{\loc}(\R)$ such that $f(x) = g((x,a))a.$
	\end{theorem}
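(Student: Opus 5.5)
The statement is the rigidity theorem of Kirchheim and Pakzad, so I would reconstruct their argument in three stages: continuity of $f$, local developability (constancy along segments), and then the global conclusion on $\R^2$.

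\emph{Step 1: continuity and a potential.} Since $Df$ is symmetric and $\Omega$ is locally simply connected, on every ball $B\subset\Omega$ we can write $f=D\varphi$ with $\varphi\in W^{2,2}(B)$ and $\det D^2\varphi=0$ a.e. In particular $\det Df\ge 0$ a.e. with $Df\in L^2$. By the theory of $W^{1,2}$ planar maps with nonnegative Jacobian (\v{S}ver\'ak, Manfredi), $f$ is weakly monotone, hence continuous. Applying the same argument to $f-\ell$ for affine $\ell$, or using $\det\le 0$, gives the analogous two-sided statement.

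Pointwise we have $Df=\lambda\, n\otimes n$ with $|n|=1$. Consequently $Df_1\parallel Df_2$, and $Df$ annihilates $n^\perp$.

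\emph{Step 2: local structure.} Fix $x\in\Omega$. If $f$ is constant in a neighborhood of $x$ we are done, so assume it is not. The goal is to show that the connected component through $x$ of the level set $\{f=f(x)\}$ contains a segment through $x$ reaching $\partial\Omega$ at both ends.

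In the smooth case this is the classical argument. On $\{\lambda\neq0\}$ the curl-free condition $\partial_j(\lambda n_in_k)$ symmetric in $(j,k)$ forces $n$ to be constant along the integral curves of $n^\perp$. These curves are therefore straight lines, along which $Df$ has constant kernel and $f$ is constant. Two such segments carrying different values of $f$ cannot meet inside $\Omega$, so a maximal segment can only terminate on $\partial\Omega$.

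For the Sobolev case I would try two routes:
\begin{itemize}
\item Approximate by a suitable regularization, or argue directly with continuity: for each direction $e$, every connected component of $\{(f,e)=c\}$ is a closed set on which $f$ is constant. This follows from $Df_1\parallel Df_2$ combined with a coarea/monotonicity argument.
\item Show convexity of level sets by contradiction. If a level set component were not a segment, a small triangle would have $f$ taking a constant value on two of its sides. A degree argument on $f$ restricted to that triangle, using $\det Df=0$ and weak monotonicity, would then force $f$ to be constant on an open set adjacent to $x$.
\end{itemize}
This is the main obstacle: passing from the a.e.\ rank-one, symmetric structure to \emph{straightness} of level sets without smoothness. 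I would first try the monotonicity/degree approach of Kirchheim.

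\emph{Step 3: the global case.} Let $\Omega=\R^2$. Every point not lying in the interior of a constancy region lies on a full line $L$ on which $f$ is constant. Suppose two such lines $L,L'$ are non-parallel. They intersect, so the constants agree. If this happens for lines in two different directions, one propagates constancy: every other line meets one of them, and a connectedness argument gives $f$ constant, which is a trivial shear.

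Otherwise all lines share a direction $a^\perp$. Boundaries of open constancy regions are unions of such lines, so these regions are strips parallel to $a^\perp$. Hence $f(x)=G((x,a))$ for some $G\in W^{1,2}_{\loc}(\R,\R^2)$. Then $Df=G'\otimes a$, and symmetry of $Df$ forces $G'\parallel a$. Thus $G(t)=g(t)a+c$, where the constant $c$ is normalized away (or absorbed as in the statement), giving $f(x)=g((x,a))a$ with $g\in W^{1,2}_{\loc}(\R)$.
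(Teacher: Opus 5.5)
\textbf{The decisive step is missing.} The paper does not prove this statement itself. It quotes the local part from Pakzad (Proposition 1.1) and the global part from Kirchheim (Corollary 2.31), because passing from ``$f\in W^{1,2}$, $Df$ symmetric with $\det Df=0$ a.e.'' to ``$f$ is constant on straight segments reaching $\partial\Omega$'' is a genuinely hard rigidity result. Your Step 2 is exactly this passage, and you leave it open; neither suggested route closes it.

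Your first route rests on a claim that is false as stated. For a shear $f(x)=g(x_1)e_1$ and $e=e_2$ one has $(f,e)\equiv 0$, so $\{(f,e)=0\}=\Omega$ is connected while $f$ is not constant on it. More fundamentally, that route only uses $Df_1\parallel Df_2$, i.e.\ $\det Df=0$, and this alone can never give straightness. For instance, $f(x)=|x|^2e_1$ has $\det Df=0$ and is constant on the components of the level sets of $(f,e)$, but these components are circles. Straightness is exactly where the symmetry of $Df$ (equivalently $f=D\varphi$) must enter: it has to force the kernel direction $n^\perp$ to be constant along level curves. Neither route uses symmetry. The ``small triangle on two sides of which $f$ is constant'' in your second route comes with no mechanism at all. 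Even your smooth-case sketch does not show that a maximal ruling cannot stop at an interior point of $\partial\{\lambda\neq 0\}$; ``segments carrying different values cannot meet'' does not imply this.

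\textbf{Step 1 is wrong as justified.} Since $\det Df\equiv 0$, the sign condition carries no information. Moreover, $W^{1,2}$ maps with nonnegative Jacobian need not be weakly monotone or continuous: $f=(\phi,0)$ with $\phi(x)=\log\log(e/|x|)$ on $B_1$ is a counterexample. Continuity does hold here, but again only because of symmetry. Since $\langle \cof Df, J\rangle=0$ for symmetric $Df$, the map $g_\varepsilon(x)\doteq f(x)+\varepsilon Jx$ satisfies $\det Dg_\varepsilon=\varepsilon^2$ a.e. Its distortion $\varepsilon^{-2}|Dg_\varepsilon|^2$ therefore lies in $L^1_{\loc}$, so by Iwaniec--\v{S}ver\'ak $g_\varepsilon$ is continuous, open and discrete, and hence $f$ is continuous.

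The same trick yields a correct piece of Step 2. The degree formula for continuous $W^{1,2}$ maps together with $\det Df=0$ gives $\deg(f,D,\cdot)=0$ on $\R^2\setminus f(\partial D)$. So if $f(x_0)\notin f(\partial D)$ for some $x_0\in D$ with $\overline{D}\subset\Omega$, a homotopy from $f$ to $g_\varepsilon$ (for small $\varepsilon$) gives $0=\deg(g_\varepsilon,D,g_\varepsilon(x_0))\ge 1$, a contradiction. Hence the component of $\{f=f(x_0)\}$ through $x_0$ is never compact in $\Omega$. This still says nothing about straightness.

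\textbf{The constant in Step 3.} The constant $c$ cannot be absorbed: $f(x)=(x_1,1)$ satisfies the hypotheses but is not of the form $g((x,a))a$. The correct global conclusion is $f(x)=g((x,a))a+c$. This is harmless in the paper, where $f=D\varphi$ and $\varphi$ may be changed by a linear function. With that correction, and granted Step 2, your Step 3 argument is fine.
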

	The first part of the statement can be found in \cite[Proposition 1.1]{Pakzad2004}, while the part concerning the case $\Omega = \R^2$ can be deduced from \cite[Corollary 2.31]{KIRK}.\ We can now show the rigidity claimed in Theorem \ref{t:b}\eqref{st:2}.
	
	\begin{proof}[Proof of Theorem \ref{t:b}\eqref{st:2}]
	Employing System \eqref{sys0}, which holds in $\R^2$, we obtain:
		\begin{equation}\label{ren}
			\dv\left(\frac{u\otimes u}{1+|u|^2}\right) = 0 \text{ in the sense of distributions in $\R^2$}.
		\end{equation}
		This follows from Theorem \ref{chain} applied with $\beta_i = u_i$, if $u = (u_1,u_2)$, and $f(z)\doteq (1 + |z|^2)^{-1}$.\ Now \eqref{ren} yields, through Poincar\'e Lemma,
		\begin{equation}\label{uuu}
		\frac{u\otimes u}{1+|u|^2} = \cof(D^2\varphi).
		\end{equation}
		We thus have $\varphi \in W_{\loc}^{2,\infty}(\R^2)$ with $\det(D^2\varphi) = 0$ a.e.. We can then employ Theorem \ref{kirp} with $f = D\varphi$ to infer that $D\varphi(x,y)= g((x,a))a$ for some $g \in W^{1,\infty}(\R)$ and $a \in \R^2$.\ If $a = 0$, then clearly $u \equiv 0$ and the proof is concluded.\ If $a \neq 0$, from \eqref{uuu} we infer
		\[
		\frac{u\otimes u}{1+|u|^2} = g'((x,a))a^\perp\otimes a^\perp.
		\]
		Multiplying by $a$ both sides, we finally get that $u(x) = \lambda(x)a^\perp$.\ Since $\dv u = 0$, then $u$ is a shear flow.
 
		\end{proof}

\subsection{Proof of Theorem \ref{t:b}\eqref{st:3}: unconstrained rigidity for $L^p$ vector fields}\label{sec:st:3}

Let us start with the following:

\begin{lemma}\label{lem:divfree}
	Let $A \in L^1_{\loc}(\R^n,\Sym^+(n))$ with $\dv A = 0$.\ Then, for all $x \in \R^n$, the function
	\begin{equation}\label{eq:fmono}
	f(R)\doteq \frac{1}{R}\int_{B_R(x)}\tr(A)(y)dy
	\end{equation}
	is monotone increasing in $R$.\ As a consequence, if $A \in L^p(\R^n,\Sym^+(n))$, for $p < \frac{n}{n - 1}$, then $A \equiv 0$.
\end{lemma}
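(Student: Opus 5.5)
The plan is to test the equation $\dv A = 0$ against the radial vector field $X(y) = (y-x)\,\phi(|y-x|)$, where $\phi$ is a cutoff function. Since $A$ is symmetric and divergence-free,
\[
\int \langle A, DX\rangle\, dy = 0 .
\]
Computing the gradient,
\[
DX = \phi(|y-x|)\,\id + \phi'(|y-x|)\,\frac{(y-x)\otimes(y-x)}{|y-x|}.
\]

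Next I choose $\phi = \phi_\eps$ to be a Lipschitz approximation of $\chi_{[0,R]}$, for instance equal to $1$ on $[0,R]$ and decreasing linearly to $0$ on $[R,R+\eps]$. A standard mollification argument justifies this Lipschitz test function. Letting $\eps\to0$ at Lebesgue points in $R$ (that is, for a.e. $R$) gives
\[
\int_{B_R(x)} \tr A\, dy = R\int_{\partial B_R(x)} (A\nu,\nu)\, d\mathcal{H}^{n-1},
\]
with $\nu$ the outer unit normal.

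Now set $g(R) = \int_{B_R(x)}\tr A$. This function is absolutely continuous, with $g'(R) = \int_{\partial B_R}\tr A$ for a.e. $R$. Positive semidefiniteness of $A$ gives $(A\nu,\nu)\le \tr A$, so the identity above yields $g(R)\le R\,g'(R)$ for a.e. $R$. Therefore
\[
(g/R)' = \frac{Rg'-g}{R^2}\ge 0 \quad\text{a.e.}
\]
Since $g/R$ is absolutely continuous on compact subsets of $(0,\infty)$, it is monotone increasing. The only delicate step is justifying the identity for a.e. $R$ with $A$ merely $L^1_{\loc}$. Fubini's theorem (the coarea formula in polar coordinates) handles this: $\int_{R<|y-x|<R+\eps} h$ divided by $\eps$ converges to the spherical integral for a.e. $R$, for each of the two integrands $h=\tr A$ and $h=(A\nu,\nu)$.

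For the consequence, suppose $A\in L^p$ with $1\le p<\frac{n}{n-1}$. By Hölder's inequality,
\[
\frac1R\int_{B_R(x)}\tr A \le \frac{C}{R}\,\|A\|_{L^p(B_R(x))}\,R^{n(1-1/p)} .
\]
The exponent $n(1-1/p)-1$ is negative exactly when $p<\frac{n}{n-1}$. Since also $\|A\|_{L^p(B_R)}\le\|A\|_{L^p}$, the right side tends to $0$ as $R\to\infty$. Monotonicity then forces $f(R)\le \lim_{R'\to\infty} f(R') = 0$ for every $R$, while $f\ge0$ because $\tr A\ge 0$. Hence $\tr A = 0$ a.e., and since $A\in\Sym^+(n)$ this gives $A\equiv 0$.
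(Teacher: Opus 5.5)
Your proof is correct and follows the paper's argument. Both rest on the identity $\int_{B_R(x)}\tr A\,dy = R\int_{\partial B_R(x)}(A\nu,\nu)\,d\mathcal{H}^{n-1}$ (from $\dv A=0$ tested against the position field), the bound $(A\nu,\nu)\le\tr A$, and H\"older's inequality for the consequence; the only difference is that the paper assumes $A$ smooth (mollification preserves symmetry, positivity and $\dv A=0$), whereas you justify the identity directly for $L^1_{\loc}$ fields via a Lipschitz radial cutoff and Lebesgue points in $R$, which is equally valid.
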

	\begin{proof}
		We can assume without loss of generality that $A$ is smooth and $x = 0$.\ To show that $f$ is monotone, we simply differentiate \eqref{eq:fmono}
		\[
		f'(R) =\frac{1}{R}\left[-\frac{1}{R}\int_{B_R}\tr(A)dx + \int_{\partial B_R}\tr(A)dx\right]
		\]
		and we rewrite using $\dv A = 0$:
		\[
		\int_{B_R}\tr(A)dx = \int_{B_R}\dv(Ax)dx = R\int_{\partial B_R}\left(A\frac{x}{|x|},\frac{x}{|x|}\right)dx \le R\int_{\partial B_R}\tr(A)dx,
		\]
		so that $f'(R) \ge 0$ for all $R>0$ follows.\ Now if $A \in L^q(\R^n,\Sym^+(n))$ and $q < \frac{n}{n - 1}$, then $\lim_{R \to \infty}f(R) = 0$ by H\"older inequality, and hence $A \equiv 0$.
	\end{proof}
	We can finally conclude the proof of Theorem \ref{t:b}.\
\begin{proof}[Proof of Theorem \ref{t:b}\eqref{st:3}]
	Let first $p \ge 3$. We can then use Theorem \ref{t:b}\eqref{st:2} to deduce that $u$ is a shear flow and hence $0$, since $u \in L^p(\R^2,\R^2), p < + \infty$.\ We can then assume $p < 3$, where the proof is concluded by applying Lemma \ref{lem:divfree} with $A \doteq u \otimes u \in L^\frac{3}{2}(\R^2,\Sym^+(2))$.
\end{proof}

\section{Proof of Theorem \ref{t:c}: rigidity for non-vanishing vector fields}\label{subs:rig}
As the theorem is local in nature, we can assume $\Omega = B_1$.\ Similarly to the proof of Theorem \ref{t:b}\eqref{st:2}, applying Theorem \ref{chain} we find that
\begin{equation*}\label{ren1}
	\dv\left(\frac{u\otimes u}{\eps+|u|^2}\right) = 0,\quad  \text{ for all $\eps > 0$}.
\end{equation*}
Letting $\eps \to 0$ and using that $u \neq 0$ a.e. in $\Omega$, we obtain
\begin{equation}\label{ren2}
	\dv\left(\frac{u\otimes u}{|u|^2}\right) = 0.
\end{equation}
As in Theorem \ref{chain} and since $B_1$ is convex, we can then find $\varphi \in W^{2,\infty}$ with
\[
\cof(D^2\varphi) = \frac{u\otimes u}{|u|^2}.
\]
In terms of a differential inclusion, the last equation is equivalent to
$$D^2\varphi \in \Gamma = \{X \in \Sym^+(2): \det(X) = 0, |X| = 1\} = \{x\otimes x: x \in \mathbb{S}^1\}, \text{ a.e. in $B_1$}.$$
The set $\Gamma$ is an elliptic curve, i.e. it is the image of the curve $\Gamma(t)\doteq e^{it}\otimes e^{it}$ which is \emph{elliptic} in the sense that there exists $C < 0$ such that
\[
C\det(\Gamma(t)-\Gamma(s)) \ge |\Gamma(t)-\Gamma(s)|^2, \quad \forall t,s \in \R,
\]
as can be easily checked by direct computations.\ We can then employ \cite[Lemma 2.1]{Lamy2023} to infer that $\frac{u\otimes u}{|u|^2}$ is constant, which in turn implies that $u$ is a shear flow as at the end of the proof of Theorem \ref{t:b}\eqref{st:2}.
\\
\\
It remains to show that the assumption $u \neq 0$ a.e. is optimal.\ Indeed, a counterexample to the first part of Theorem \ref{t:c} can be easily found by considering $\Omega = (0,1)\times \R$ and
\begin{equation}
	u(x) = \begin{cases}
		e_1, &\text{ if } y \le 0,\\
		e_1 + e_2, &\text{ if } y \ge x,\\
		0, &\text{ if } 0 \le y \le x.
	\end{cases}
\end{equation}\qed

\section{Young measures and proof of Theorem \ref{t:d}}\label{sec:apprig}

In order to show Theorems \ref{t:d}-\ref{t:e}, we will first properly introduce gradient Young measures.\ Recall, from now on, that the set $K$ is the one defined in \eqref{e:diffinc}.
\subsection{Young measures}\label{sec:YM}
Let $p \ge 2$ and consider \eqref{e:app}, for an equibounded sequence $(u_j)_j \subset L^p(B_1,\R^2)$:
\begin{equation}\label{e:app1}
	\begin{cases}
		\dv(u_j \otimes u_j) = \dv(X_j),\\
		\dv(u_j) = 0,\\
		\sup_j\|X_j\|_{L^{\frac{p}{2}}} < +\infty,\; X_j \to 0 \text{ in }L^1(\Omega).
	\end{cases}
\end{equation}
It is more convenient to consider the \emph{curl} version of this system, which is obtained via a simple multiplication by a rotation $J$ of ninety degrees in the plane, see \eqref{e:J}. Consider in fact the sequence
\begin{equation}\label{e:Aj}
	A_j(x) \doteq \left(\begin{array}{cc}Ju_j\\ Ju_j\otimes Ju_j + JX_jJ\end{array}\right),
\end{equation}
which satisfies $\curl A_j = 0$. Therefore, $A_j = D\psi_j$ a.e. for some potential $\psi_j: B_1 \subset \R^2 \to \R^3$.\ We want to say that $(D\psi_j)_j$ generates a gradient Young measure.\

\begin{theorem}[Fundamental Theorem on Young measures]\label{FUN}
	Let $E\subset \R^n$ be a Lebesgue measurable set with finite measure and let $p > 1$.\ Consider a sequence $(z_j)_j \subset L^p(E,\R^m)$ weakly converging in $L^p$ to some function $z$.\ Then, there exists a subsequence $(z_{j_k})_k$ that \emph{generates the Young measure} $\nu$, where $\nu$ is a weak-$*$ measurable map $\nu:E \to \mathcal{M}(\R^m)$ such that for $\mathcal{L}^n$-a.e. $x \in E$, $\nu_x \in \mathcal{P}(\R^m)$.\ The family $\nu = (\nu_x)_{x \in E}$ has the property that for every $f \in C(\R^m)$ such that
	\[
	|f(y)|\le C(1 + |y|^q), \text{ for } q < p \text{ and for some $C > 0$},
	\]
	the following holds
	\[
	f(z_j) \rightharpoonup \bar{f}, \text{ weakly in } L^{\frac{p}{q}}(E), \quad \text{ where }\bar f(x) = \langle \nu_x,f \rangle.
	\]
	In particular, the choice $f(y) = y,\; \forall y \in \R^m$ yields
	\begin{equation}\label{exp}
		z(x) = \langle\nu_x,f\rangle.
	\end{equation}
\end{theorem}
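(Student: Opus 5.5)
This is the classical fundamental theorem on Young measures, and I would follow Ball's approach. The first step produces the measures. Since $(z_j)_j$ is bounded in $L^p(E,\R^m)$, the maps $x\mapsto \delta_{z_j(x)}$ define elements of $L^\infty_{w}(E,\mathcal{M}(\R^m))$, the dual of $L^1(E,C_0(\R^m))$, and they have norm at most one. The space $L^1(E,C_0(\R^m))$ is separable, so Banach--Alaoglu gives a subsequence and a weak-$*$ measurable family $\nu=(\nu_x)_x$ with $\|\nu_x\|\le 1$. This family satisfies
\[
\int_E \phi(x)\, g(z_{j_k}(x))\,dx \to \int_E \phi(x)\langle \nu_x, g\rangle\,dx
\]
for all $\phi\in L^1(E)$ and $g\in C_0(\R^m)$. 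Positivity of each $\nu_x$ is inherited from that of the Dirac masses.

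The second step shows that each $\nu_x$ has mass one, which is the tightness argument. Take a cutoff $g_R\in C_c(\R^m)$ with $0\le g_R\le 1$ and $g_R=1$ on $B_R$. Chebyshev's inequality gives
\[
|\{|z_j|>R\}|\le \sup_j\|z_j\|_{L^p}^p R^{-p}.
\]
Testing with $\phi=\chi_F$ for measurable $F\subset E$ then yields
\[
\int_F \langle \nu_x,g_R\rangle\,dx \ge |F| - CR^{-p}.
\]
Letting $R\to\infty$ and applying monotone convergence gives $\int_F \nu_x(\R^m)\,dx\ge |F|$ for every $F$. Hence $\nu_x(\R^m)=1$ a.e., and we obtain the bound $\int_E\langle\nu_x,|y|^p\rangle\,dx\le \liminf_k\|z_{j_k}\|_p^p$ by lower semicontinuity, using truncations of $|y|^p$.

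The third step extends the convergence to continuous $f$ with $|f(y)|\le C(1+|y|^q)$ and $q<p$. This is where the real work lies. The functions $f(z_j)$ are bounded in $L^{p/q}$ and $p/q>1$, so it suffices to test against $\phi\in L^\infty(E)$, which is dense in $L^{(p/q)'}$. Split $f=f g_R+f(1-g_R)$. The first piece lies in $C_c$, so the first step applies to it. The tail is controlled uniformly in $j$ by
\[
\int_{\{|z_j|>R\}}(1+|z_j|^q)\,dx \le C R^{q-p}\sup_j\|z_j\|_p^p,
\]
and the same bound holds on the $\nu$ side because of the moment estimate from the second step. Sending first $k\to\infty$ and then $R\to\infty$ gives $f(z_{j_k})\rightharpoonup \langle\nu_x,f\rangle$ weakly in $L^{p/q}$.

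Finally, take $f(y)=y$, which corresponds to $q=1<p$. Since weak limits are unique, this identifies $z(x)=\langle\nu_x,\mathrm{id}\rangle$, which is \eqref{exp}. I expect the uniform tail control in the third step, which is exactly the equi-integrability supplied by $q<p$, to be the main obstacle. The remaining steps are standard measure-theoretic bookkeeping, including the measurability of $x\mapsto\langle\nu_x,f\rangle$, which follows by approximating with $C_0$ functions.
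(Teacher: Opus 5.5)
Your proposal is correct and follows the standard route: Ball's duality $L^1(E,C_0(\R^m))^* = L^\infty_w(E,\mathcal{M}(\R^m))$ to extract $\nu$, tightness from the uniform $L^p$ bound to get $\nu_x\in\mathcal{P}(\R^m)$ a.e., and the uniform tail estimate from $q<p$ to pass from $C_0$ test functions to integrands of growth $q$. The paper does not prove this statement itself but quotes it as a known result, referring to \cite[Section 3]{DMU}, where it is proved by essentially this same argument.
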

\noindent If the sequence $z_j = D\psi_j$, as in the case above, then we will call $\nu$ a \emph{gradient Young measure}.
\\
\\
We identify the Young measure generated by the matrix fields $M_j$ of \eqref{e:AAj} and the one generated by the matrix field $A_j$ of \eqref{e:Aj}, since anyway they would only differ by a linear transformation.\ We collect these considerations and a useful restating of the properties of the matrix fields $(A_j)_j$ in the next:

\begin{lemma}\label{lem:tec}
Let $p > 2$. Given $(u_j)_j$ equibounded in $L^p(B_1,\R^2)$ solving \eqref{e:app1} for some sequence $(X_j)_j$, let $A_j$ be the matrix field defined in \eqref{e:Aj}. Then, $A_j = D\psi_j$, for $\psi_j: B_1 \subset \R^2 \to \R^3$ with zero average and, if $\psi_j = (f_j,F_j)$, $f_j: B_1 \subset \R^2 \to \R$ and $F_j: B_1\subset \R^2 \to \R^2$, we have
\begin{equation}\label{e:gro}
	\|f_j\|_{W^{1,p}(B_1)} + \|F_j\|_{W^{1,\frac{p}{2}}(B_1,\R^2)} \le C, \quad \forall j.
\end{equation}
Furthermore, for any Young measure $(\nu_x)_x$ generated by $(A_j)_j$, $\nu_x$ is supported in $K$ for a.e. $x$, where $K$ is defined in \eqref{e:diffinc}. Conversely, given a sequence $\varphi_j: B_1 \subset \R^2 \to \R^3$ with $\varphi_j = (g_j,G_j)$ fulfilling \eqref{e:gro} and
\[
\dist(D\varphi_j,K) \to 0 \text{ in }L^1(B_1),
\]
there exist $u_j,X_j$ which solve \eqref{e:app1}, with $(u_j)_j$ equibounded in $L^p(B_1,\R^2)$.\ Moreover, given such $u_j,X_j$ and defined $(A_j)_j$ as in \eqref{e:Aj}, we have $D\varphi_j = A_j$ and hence if (a subsequence of) $(D\varphi_j)_j$ generates the Young measure $\nu = (\nu_x)_{x \in B_1}$, then the corresponding subsequence of $(A_j)_j$ also generates $\nu$.
\end{lemma}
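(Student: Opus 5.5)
The proof is a sequence of elementary verifications; I would carry them out in the order of the statement.

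\emph{Step 1: the curl-free structure and the potentials.} Write $A_j$ row by row. The first row is $Ju_j$. Since $\dv u_j = 0$, the rotated field $Ju_j$ is curl-free, so $Ju_j = Df_j$ for some $f_j\in W^{1,p}(B_1)$; this uses that $B_1$ is simply connected (Poincar\'e lemma for $L^p$ fields). For the last two rows, observe that $Ju_j\otimes Ju_j = J(u_j\otimes u_j)J^T$ and $J^T=-J$, so $Ju_j\otimes Ju_j + JX_jJ = -J(u_j\otimes u_j - X_j)J$. Each row of $M(J\,\cdot)$-type expressions $Y J$ has curl equal to $\pm$ the divergence of the corresponding row of $Y$. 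Hence $\dv(u_j\otimes u_j - X_j)=0$ gives that each row of $J(u_j\otimes u_j-X_j)J$ is curl-free, and this is a direct $2\times 2$ check. Thus that block equals $DF_j$ with $F_j\in W^{1,p/2}$, because $u_j\otimes u_j-X_j\in L^{p/2}$ and $p/2>1$. I would normalize $\psi_j=(f_j,F_j)$ to have zero average. The bound \eqref{e:gro} then follows from the Poincar\'e inequality together with $\|Df_j\|_{L^p}\le\|u_j\|_{L^p}$ and $\|DF_j\|_{L^{p/2}}\le \|u_j\|_{L^p}^2+\|X_j\|_{L^{p/2}}$.

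\emph{Step 2: support of the Young measure.} Note that $A_j - (Ju_j, Ju_j\otimes Ju_j) = (0, JX_jJ)$, and $X_j\to0$ in $L^1$. Hence $\dist(A_j, K)\le |X_j|\to 0$ in $L^1$. Testing with $f=\min(\dist(\cdot,K),1)$, which is continuous and bounded, Theorem \ref{FUN} gives $\int \langle\nu_x,f\rangle\,dx = \lim\int f(A_j)\,dx = 0$. Therefore $\spt\nu_x\subset K$ a.e., since $K$ is closed. Here $K$ is understood after the identification by the linear transformation $M\mapsto$ (first row rotated, second block conjugated), under which $K$ is mapped to itself as a set of the form $\{(y, y\otimes y)\}$.

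\emph{Step 3: the converse.} Given $\varphi_j=(g_j,G_j)$, I set $u_j \doteq -J Dg_j$, so that $Ju_j = Dg_j$ and $u_j$ is divergence free and equibounded in $L^p$. Then I define $X_j$ by $JX_jJ \doteq DG_j - Dg_j\otimes Dg_j$, i.e.\ $X_j = -J(DG_j - Dg_j\otimes Dg_j)J$, which lies in $L^{p/2}$ with equibounded norm by \eqref{e:gro}. By construction $A_j = D\varphi_j$. The equation $\dv(u_j\otimes u_j - X_j)=0$ is equivalent, by the same computation as in Step 1 run backwards, to $\curl DG_j = 0$, which holds.

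The one point needing care, and the main obstacle, is $X_j\to0$ in $L^1$. We only know $\dist(D\varphi_j,K)\to0$. Pick a measurable nearest-point selection $(y_j, y_j\otimes y_j)\in K$ with $|Dg_j-y_j|+|DG_j - y_j\otimes y_j| \le \dist(D\varphi_j,K)$. Then
\[
|DG_j - Dg_j\otimes Dg_j| \le \dist(D\varphi_j,K) + |y_j\otimes y_j - Dg_j\otimes Dg_j| \le \dist(D\varphi_j,K)\,(1+2|Dg_j| + \dist(D\varphi_j,K)).
\]
The remaining issue is the product term $\dist\cdot |Dg_j|$. This converges to zero in $L^1$ because of the following chain. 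The inequality $\dist(D\varphi_j,K)\le |D\varphi_j|+C$ gives $L^{p/2}$ equiboundedness of $\dist(D\varphi_j,K)$. Convergence in $L^1$ plus $L^{p/2}$ bounds with $p/2>1$ gives convergence in $L^s$ for every $s<p/2$. Finally H\"older with $Dg_j\in L^p$ closes the estimate, since $\frac{1}{s}+\frac1p\le1$ is possible for some $s<p/2$ when $p>2$ (take $s$ close to $p/2$, so that $\frac 2p+\frac1p<1$ iff $p>3$; otherwise use the $\dist^2$ term similarly by interpolation, handling only $L^1$ convergence). If this exponent bookkeeping fails for $2<p\le3$, I would instead truncate: on $\{|Dg_j|\le M\}$ the bound is immediate, and on its complement I would use uniform integrability of $|Dg_j|\,\dist$ coming from the $L^{p}\times L^{p/2}$ bounds. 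Finally, the identity $D\varphi_j = A_j$ shows that both sequences generate the same Young measure.
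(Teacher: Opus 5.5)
Your Steps 1 and 2 are correct and match the paper. There is one slip: since $J^{-1}=-J$, the relation $JX_jJ = DG_j - Dg_j\otimes Dg_j$ gives $X_j = J(DG_j-Dg_j\otimes Dg_j)J$, not $-J(\cdots)J$. With the correct sign this is the paper's $X_j = u_j\otimes u_j - \cof^T(DG_j)$.

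The genuine gap is the last step, $X_j\to 0$ in $L^1$. It fails exactly for $2<p\le 4$, which is where the converse is actually used: Theorem \ref{t:e} invokes it with $p<4$. Write $d_j \doteq \dist(D\varphi_j,K)$. You try to show that the majorant $d_j(1+2|Dg_j|+d_j)$ tends to $0$ in $L^1$, but your bounds only give $d_j$ bounded in $L^{p/2}$.
\begin{itemize}
\item The term $d_j^2$ goes to $0$ in $L^1$ only if $p>4$. For $p\le 4$ it is only bounded in $L^{p/4}$ with $p/4\le 1$, so nothing in your argument even makes it integrable. It is not ``handled similarly by interpolation''.
\item The H\"older estimate for $d_j|Dg_j|$ needs $p>3$, as you note. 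Your fallback for $2<p\le 3$ is false: $L^p\times L^{p/2}$ bounds only make $|Dg_j|\,d_j$ bounded in $L^{p/3}$ with $p/3\le 1$, which gives no uniform integrability.
\item The bound on $\{|Dg_j|\le M\}$ is not immediate either, because the $d_j^2$ term is still present there.
\end{itemize}

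The missing idea is that you should not try to integrate the majorant at all. The sequence $X_j$ is itself equi-integrable, since $|X_j|\le |Dg_j|^2+|DG_j|$ is bounded in $L^{p/2}$ and $p/2>1$. By Vitali's theorem it therefore suffices to show $X_j\to 0$ in measure, and for that the majorant only needs to go to zero in measure.

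To get this, absorb the quadratic term using $d_j\le |D\varphi_j|$ (true because $0\in K$). This gives $|X_j|\le (1+3|D\varphi_j|)\,d_j$. Since $1+3|D\varphi_j|$ is bounded in $L^1(B_1)$ and $d_j\to0$ in measure, Chebyshev gives, for every $s>0$,
$|\{(1+3|D\varphi_j|)d_j>t\}|\le |\{d_j>t/s\}|+C/s$.
So the product tends to $0$ in measure. This is exactly the paper's route via Lemma \ref{lem:basic}, and it works for every $p>2$ with no exponent bookkeeping. Your truncation idea can be repaired the same way, but only by using the equi-integrability of $X_j$ on the bad sets, not that of the majorant.
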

\begin{proof}
	The first part is easier.\ In particular, \eqref{e:gro} is trivial. Let us consider a Young measure $(\nu_x)_x$ generated by $(A_j)_j$.\ By observing that, by definition \eqref{e:Aj} of $(A_j)_j$,
	\[
	\dist(D\psi_j,K) \le |X_j|,
	\]
	we have that $\dist(D\psi_j,K) \to 0$ in $L^1$. Therefore, for any smooth $\eta \in C^\infty_c(B_1)$, we can use the definition of Young measures to infer
	\[
	0 = \lim_j \int_{B_1}\dist(D\psi_j,K)(x)\eta(x)dx = \int_{B_1}\eta(x)\int_{\R^{3\times 2}}\dist(X,K)d\nu_x dx.
	\]
	We thus find that, for a.e. $x \in B_1$, $\dist(X,K) = 0$ for $\nu_x$-a.e. $X$. Let us now move to the second part of the proof. We define $u_j \doteq -JDg_j$, so that $\dv(u_j) = 0$ and $(u_j)_j$ is equibounded in $L^p$.\ Next let
	\begin{equation}\label{eq:XuF}
	X_j \doteq u_j\otimes u_j-\cof^T(DG_j).
	\end{equation}
	Observe that if $(A_j)_j$ are the matrix fields associated to these $u_j$, $X_j$ as in \eqref{e:Aj}, then:
	\begin{equation}\label{e:ajD}
	A_j = D\varphi_j \text{ for all } j.
	\end{equation}
	From \eqref{eq:XuF}, we immediately find that $(X_j)_j$ is equibounded in $L^{\frac{p}{2}}$ and
	\[
	\dv(u_j\otimes u_j) = \dv(X_j).
	\]
	We thus only need to show that $X_j \to 0$ in $L^1$. Equivalently, we only need to show that its norm converges to zero in measure, since it is equibounded in $L^\frac{p}{2}$ and $p > 2$.\ To see this, consider $g: \R^{3\times 2} \to \R$ defined as
	\[
	g\left(X\right) \doteq |a\otimes a - A|, \quad \text{ for } X = \left(\begin{array}{c}a\\ A\end{array}\right) \in \R^{3\times 2}, a\in \R^2,A \in \R^{2\times 2}.
	\]
	For any other matrix $Y = \left(\begin{array}{c}b\\B\end{array}\right)$, we have
	\begin{equation}\label{eq:g}
	|g\left(X\right) - g(Y)| \le (|a| +|b|)|a-b| + |A-B|.
	\end{equation}
	Let $Z = \left(\begin{array}{c}b\\ b\otimes b\end{array}\right) \in K$ be a matrix for which $\dist(X,Z) = \dist(X,K)$.\ From \eqref{eq:g}, we infer
	\[
	g(X) \le (1+3|X|)\dist(X,K).
	\]
	We evaluate this inequality at $D\varphi_j$, and we notice that $|X_j| = g(D\varphi_j)$:
	\[
	|X_j| = g(D\varphi_j) \le (1+ 3|D\varphi_j|)\dist(D\varphi_j,K), \quad \text{a.e. in }B_1.
	\]
	Exploiting Lemma \ref{lem:basic} below, we deduce that $|X_j|$ converges to $0$ in measure, and hence that $X_j\to 0$ in $L^1(B_1,\R^{2\times 2})$.\ 
\end{proof}

\begin{lemma}\label{lem:basic}
Let $(a_j)_j$ and $(b_j)_j$ be sequences such that $\|a_j\|_{L^1(B_1)} \le C$ for all $j$ and $b_j \to 0$ in measure. Then, $p_j \doteq a_jb_j$ converges to zero in measure.
\end{lemma}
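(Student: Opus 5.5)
The plan is to reduce the claim to two facts: the sequence $(a_j)_j$ is bounded in measure (by Chebyshev), and $b_j\to 0$ in measure. Fix $\varepsilon>0$ and $\delta>0$; the goal is to show that
\[
\mathcal{L}^2\bigl(\{x\in B_1:|a_j(x)b_j(x)|>\varepsilon\}\bigr)<\delta
\]
for all $j$ large enough.

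First, choose a threshold $M>0$ so that $C/M<\delta/2$. By Chebyshev's inequality,
\[
\mathcal{L}^2(\{|a_j|>M\})\le \frac{\|a_j\|_{L^1(B_1)}}{M}\le \frac{C}{M}<\frac{\delta}{2} \quad\text{for every } j.
\]
Second, split the superlevel set of the product as
\[
\{|a_jb_j|>\varepsilon\}\subset \{|a_j|>M\}\cup\{|b_j|>\varepsilon/M\}.
\]
This inclusion holds because if $|a_j|\le M$ and $|b_j|\le \varepsilon/M$, then $|a_jb_j|\le\varepsilon$. Since $b_j\to 0$ in measure, there is $j_0$ such that $\mathcal{L}^2(\{|b_j|>\varepsilon/M\})<\delta/2$ for all $j\ge j_0$. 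Adding the two estimates gives the claim for all $j\ge j_0$.

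There is no real obstacle here. The only point requiring care is to choose $M$ before invoking convergence in measure of $b_j$, so that $M$ is uniform in $j$; this uniformity is exactly what the $L^1$ bound on $(a_j)_j$ provides. In the application in Lemma \ref{lem:tec}, one takes $a_j=1+3|D\varphi_j|$, which is bounded in $L^1$ since $D\varphi_j$ is bounded in $L^{p/2}$ with $p>2$. One takes $b_j=\dist(D\varphi_j,K)$, which tends to $0$ in $L^1$ and hence in measure.
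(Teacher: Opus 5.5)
Your proposal is correct and is essentially the paper's proof: you fix a threshold $M$ uniformly in $j$ via Chebyshev and the $L^1$ bound, split $\{|a_jb_j|>\varepsilon\}$ into $\{|a_j|>M\}$ and $\{|b_j|>\varepsilon/M\}$, and then use convergence in measure of $b_j$. The paper does exactly this with $s$ in place of $M$.
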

\begin{proof}
We assume, without loss of generality, that $a_j,b_j \ge 0$.\ Fix any $t > 0$ and $\eps > 0$. Then, for any $s > 0$:
	\begin{align*}
		|\{x: a_jb_j > t\}| &\le |\{x: a_jb_j > t\} \cap \{x: a_j \le s\}| + |\{x: a_jb_j > t\} \cap \{x: a_j \ge s\}|\\
		& \le |\{x: a_jb_j > t\} \cap \{x: a_j \le s\}| + |\{x: a_j \ge s\}|\\
		&  \le |\{x: b_j > t/s \}| + C/s,
	\end{align*}
the last inequality being true by Chebyshev's inequality. Now choose $s$ so that $C/s \le \eps/2$.\ Having fixed $s>0$, we can use the convergence in measure to $0$ of the sequence $(b_j)_j$ to conclude the proof.
\end{proof}

\subsection{Proof of Theorem \ref{t:d}: approximate rigidity}\label{subs:apprig}

Let $(u_j)_j, (X_j)_j$ be sequences of maps solving \eqref{e:app}, with $\|u_j\|_{L^p} \le C$ and $p > 4$. We wish to show that the gradient Young measure $(\nu_x)_{x \in B_1}$ generated by the matrix fields $(A_j)_j$ defined as in \eqref{e:Aj} is such that, for a.e. $x$, there exists $b(x) \in \R^2$ for which:
\begin{equation}\label{e:thesis}
\spt(\nu_x) \subset K_{b(x)}= \left\{\left(\begin{array}{cc}tb(x) \\ t^2b(x)\otimes b(x)\end{array}\right): t \in \R \right\}.
\end{equation}
We already know from Lemma \ref{lem:tec} that $\spt(\nu_x) \subset K$ for a.e.\ $x$, where $K$ is the set of \eqref{e:diffinc}.\ Let $A_j = D\psi_j$, with $\psi_j = (f_j,F_j)$, $f_j: B_1 \subset \R^2 \to \R$ and $F_j: B_1\subset \R^2 \to \R^2$ fulfilling the bounds \eqref{e:gro}.\ Using the definition of Young measure with the choice of function $h(X) \doteq X$ for all $X \in \R^{3\times 2}$, we find that $f_j$ and $F_j$ converge weakly in $L^p$ and $L^{p/2}$ to some functions $f$ and $F$, respectively.\ Furthermore, if $\psi = (f,F)$, then $D\psi(x)$ is the barycenter of $\nu_x$ for a.e. $x$, as expressed by \eqref{exp}.

Since $\spt(\nu_x) \subset K$, we obtain that also $DF(x) \in \Sym^+(2)$ for a.e. $x$. Furthermore, the determinant is a null Lagrangian, see \cite[Theorem 2.3]{DMU}, and hence for any $\varphi \in C^\infty_c(B_1)$,
\[
\lim_{j\to \infty}\int_{B_1}\varphi(x)\det(DF_j)(x)dx = \int_{B_1}\varphi(x)\det(DF)(x)dx.
\]
This readily implies that, for a.e. $x$,
\[
\det(DF(x)) = \int_{\R^{3\times 2}}\det(\pi_2(X))d\nu_x(X),
\]
recall the notation $\pi_i$ of Section \ref{sec:not}.\ As $\spt(\nu_x) \subset K$ for a.e. $x$, we deduce that $\det(DF(x)) = 0$ for a.e. $x$. In other words, for a.e. $x$ there exists a vector $b(x)$ such that
\[
DF(x)= b(x)\otimes b(x).
\] 
Letting $\mu_x'$ and $\mu_x$ be the pushforward of $\nu_x$ through $\pi_1(X)$ and $\pi_2(X)$ respectively, we now show that
\begin{equation}\label{e:mux}
\spt(\mu_x) \subset \{t^2b(x)\otimes b(x): t \in \R\}, \quad \text{ for a.e. }x \in B_1.
\end{equation}
This is a purely algebraic fact: as 
\[
\spt(\mu_x) \subset \{v\otimes v: v \in \R^2\}
\]
and its barycenter is the matrix $b(x)\otimes b(x)$, it suffices to consider the relation
\[
b(x)\otimes b(x) = \int_{\R^{2\times 2}}Xd\mu_x(X)
\]
and scalar multiply this with $Jb(x)\otimes Jb(x)$. This immediately implies \eqref{e:mux}. Up to now, we have shown that, for a.e. $x$,
\[
\spt(\nu_x) \subset \left\{\left(\begin{array}{cc}a \\ t^2b(x)\otimes b(x)\end{array}\right): a \in \R^2, t \in\R\right\} \cap K. 
\]
The only points $Z$ in the intersection of these two sets are precisely elements of $K_{b(x)}$. Indeed we need to have $a,b(x),u \in \R^2$ and $t \in \R$ such that
\[
Z = \left(\begin{array}{cc}a \\ t^2b(x)\otimes b(x)\end{array}\right) = \left(\begin{array}{cc}u \\ u\otimes u\end{array}\right).
\]
If $b(x) = 0$, then we immediately find $a = u = 0$. Thus, $Z = 0$, which belongs to $K_{b(x)} = \{0\}$. If $b(x) \neq 0$, then we can scalar multiply the last two rows by $Jb(x)\otimes Jb(x)$ to find $(u,Jb(x))^2 = 0$, and hence that $u = \lambda b(x)$ for some $\lambda \in \R$. Furthermore, $\lambda = \pm t$. Thus $Z$ reduces to
\[
Z = \left(\begin{array}{cc}\pm tb(x) \\ t^2b(x)\otimes b(x)\end{array}\right) \in K_{b(x)}.
\]
In other words, $\spt(\nu_x) \subset K_{b(x)}$ for a.e. $x$, and we conclude the proof.

\section{Staircase laminates and proof of Theorem \ref{t:e}}\label{sec:te}

In this section we will show Theorem \ref{t:e}.\ Specifically, recalling the sets
\[
K = \left\{X \in \R^{3\times 2}: X =\left(\begin{array}{cc} x \\ x\otimes x\end{array}\right)\right\} \text{ and } K_a = \left\{X \in \R^{3\times 2}: X =\left(\begin{array}{cc} ta \\ t^2a\otimes a\end{array}\right), t \in \R\right\},
\]
 we will construct a sequence $\psi_n = (g_n,G_n)$, for $g_n \in \Lip(B_1,\R)$ and $G_n \in \Lip(B_1,\R^2)$, $B_1\subset \R^2$, having the following properties:

\begin{enumerate}
	\item $\|g_n\|_{W^{1,p}(B_1)} + \|G_n\|_{W^{1,p/2}(B_1,\R^2)} \le C(p)$, for all $p < 4$;\label{item:conv}
	\item $\displaystyle\lim_n\int_{B_1}\dist(D\psi_n,K)dx = 0$;\label{item:dist}
\end{enumerate} 
In addition, the Young measure $(\nu_x)_{x \in B_1}$ generated by $(D\psi_n)_n$ is homogeneous, namely it does not depend on $x$ (we will therefore only denote it with $\nu \in \mathcal{P}(\R^{3\times 2})$), and enjoys the following properties
\begin{equation}\label{item:e}
\spt(\nu) \subset K_{e_1}\cup K_{e_2} \text{ and } \int_{\R^{3\times 2}}\dist(X,K_{e_i})d\nu > 0, \; \forall i = 1,2.
\end{equation}
Observe that this shows Theorem \ref{t:e} by means of the second part of Lemma \ref{lem:tec}. As said in the introduction, in this section we employ the convex integration method of \emph{staircase laminates}. We first recall this method and postpone the construction of the required laminate to the final subsection.

\subsection{Laminates of finite order and staircase laminates}

The basic idea is the following: let $A,B \in \R^{n\times m}$ be rank-one connected, namely $A-B = a\otimes b$, and let $\varphi$ be a function which equals $(1-\lambda) t$ on $[0,\lambda]$ and $-\lambda t$ on $[\lambda,1]$ and is extended to be $1$-periodic.\ Then the maps
\[
u_\eps(x) = (\lambda A + (1-\lambda) B)x + \eps\varphi((x,b)/\eps)a
\]
generate, as $\eps\to 0$, the Young measure
\[
\nu = \lambda \delta_{A} + (1-\lambda)\delta_{B}.
\]
This reasoning can be iterated, if for instance $B$ is the barycenter of a segment in another rank-one direction. The following definition explains this idea of splitting, and Proposition \ref{ind} provides the analog of the family $u_\eps$ after such iterations.

\begin{Def}\label{def:es}
	Let $\nu,\mu \in \mathcal{P}(\R^{n\times m})$. Let $\nu = \sum_{i= 1}^N\lambda_i\delta_{A_i}$. We say that $\mu$ can be obtained via \emph{elementary splitting from }$\nu$ if for some $i \in \{1,\dots,N\}$, there exist $B,C \in \R^{n\times m}$, $\lambda \in [0,1]$ such that
	\[
	\rank(B-C) = 1, \quad A_i = sB + (1-s)C,
	\]
	for some $s \in (0,1)$ and
	\[
	\mu = \nu +\lambda\lambda_i(-\delta_{A_i} + s\delta_B + (1-s)\delta_C).
	\]
	A measure $\nu = \sum_{i= 1}^r\lambda_i\delta_{A_i}\in \mathcal{P}(\R^{n\times m})$ is called a \emph{laminate of finite order} if there exists a finite number of measures $\nu_1,\dots,\nu_N \in \mathcal{P}(\R^{n\times m})$ such that
	\[
	\nu_1 = \delta_X, \nu_N = \nu
	\]
	and $\nu_{j + 1}$ can be obtained via elementary splitting from $\nu_j$, for every $j\in \{1,\dots,N-1\}$.
\end{Def}
The next Proposition can be found in \cite[Lemma 3.2]{SMVS}.
\begin{prop}\label{ind}
	Let $\nu = \sum_{i = 1}^{r}\lambda_i\delta_{A_i} \in \mathcal{P}(\R^{n\times m})$, $\lambda_i\neq 0, \forall i$, be a laminate of finite order, and let $A$ be the barycenter of $\nu$. If $\Omega$ is an open and bounded set, $b \in \R^m$ and $u(x)\doteq Ax+b$, then for every $\eps >0$ there exists a piecewise affine map $u_\eps \in \Lip(\Omega,\R^n)$ with the following properties:
	\begin{enumerate}[(i)]
		\item\label{or:1} $\|u - u_\eps\|_\infty \le \eps$;
		\item\label{or:2} $u_\eps(x) = Ax + b$ on $\partial \Omega$;
		\item\label{44} $|\{x \in \Omega: \dist(Du_\eps(x),A_i) \le \eps\}| = \lambda_i|\Omega|,\forall i$.
	\end{enumerate}
 
\end{prop}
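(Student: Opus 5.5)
Proposition \ref{ind} is proved by induction on the number $N$ of elementary splittings in Definition \ref{def:es}. The base case is the simple laminate described before the definition, localized so that the boundary condition \eqref{or:2} holds.

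\textbf{Base step.} Let $\nu=\lambda\delta_{A_1}+(1-\lambda)\delta_{A_2}$ with $A_1-A_2=a\otimes n$. Consider the periodic map $x\mapsto Ax+b+\delta\varphi((x,n)/\delta)a$, which takes gradients exactly $A_1$ and $A_2$ in the proportions $\lambda$, $1-\lambda$ on layers of width $\sim\delta$. To impose the boundary datum, I would first exhaust $\Omega$ up to measure $\eta$ by finitely many small disjoint open sets of a fixed shape (a rescaled rhombus or simplex adapted to $n$), using Vitali's covering. On each piece I cut off the oscillation. One option is to replace $\delta\varphi$ by $\min\{\delta\varphi,\ \dist(x,\partial\text{piece})\cdot C\}$-type truncations. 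Another is to take $\max/\min$ of the laminated map with a cone of slope close to $A$, built on a polygonal domain whose faces are chosen so that the gluing region has gradients within $\eps$ of $A_1$ or $A_2$ except on a set of measure $O(\delta)$.

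This leaves a small error set. I would remove the exact-measure requirement in \eqref{44} by a final adjustment: slightly rescaling the proportion $\lambda$ in each piece so that the measure of $\{\dist(Du_\eps,A_i)\le\eps\}$ matches $\lambda_i|\Omega|$ exactly. This works because the measure depends continuously on $\lambda$ and the layer parameters. Property \eqref{or:1} follows from $\|\delta\varphi\|_\infty\le\delta$.

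\textbf{Inductive step.} Suppose $\nu_{N+1}$ is obtained from $\nu_N=\sum\lambda_i\delta_{A_i}$ by splitting $A_i=sB+(1-s)C$. By the induction hypothesis we have $u_{\eps'}$ for $\nu_N$. The set $\Omega_i=\{\dist(Du_{\eps'},A_i)\le\eps'\}$ is a finite union of open triangles on which $u_{\eps'}$ is affine, with gradient $A_i'$ close to $A_i$. On each such triangle I apply the base step to the affine map with gradient $A_i'$ and the splitting $A_i'=sB'+(1-s)C'$, where $B'-C'=B-C$ is still rank one and $B',C'$ are within $\eps'$ of $B,C$. I modify $u_{\eps'}$ only on a fraction $\lambda$ of $\Omega_i$, namely on a union of triangles of total measure $\lambda|\Omega_i|$; splitting triangles further if needed allows an exact choice. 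The boundary agreement makes the glued map Lipschitz and piecewise affine. Sup-norm errors add up, so choosing $\eps'$ and the inner $\eps$ small enough gives \eqref{or:1}, and \eqref{or:2} is inherited. The measures in \eqref{44} come out as $\lambda_j|\Omega|$ for the untouched atoms and $\lambda\lambda_i s|\Omega|$, $\lambda\lambda_i(1-s)|\Omega|$ for the new ones, up to the exactness adjustment.

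\textbf{Main obstacle.} The delicate point is the base step with exact boundary values and exact measure equality. The cutoff region near $\partial\Omega$ produces gradients that are not close to $A_1$ or $A_2$. I would handle this by Vitali exhaustion, choosing pieces so that cutoff zones have total measure $\le\eps$. The leftover gradients, which lie in a bounded set, are then absorbed by the rescaling trick: I would interpret \eqref{44} as achievable up to redistributing an $\eps$-small set, and tune proportions to make it exact.
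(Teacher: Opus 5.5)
There is a genuine gap, and it sits exactly where you locate the main obstacle.

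For $\eps$ smaller than half the minimal distance between distinct atoms, the sets $E_i=\{\dist(Du_\eps,A_i)\le\eps\}$ are pairwise disjoint. So \eqref{44} together with $\sum_i\lambda_i=1$ forces $|\bigcup_i E_i|=|\Omega|$. In other words, $Du_\eps$ must be $\eps$-close to $\spt\nu$ \emph{almost everywhere}, with no exceptional set at all.

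Your base step does not achieve this. The remainder of the finite Vitali exhaustion has $Du_\eps=A$, which is at distance $\min\{\lambda,1-\lambda\}|A_1-A_2|$ from the atoms. The truncation or cone zones are also bad. Both are sets of positive measure outside $E_1\cup E_2$.

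Re-tuning the volume fraction cannot repair this. It only moves measure between $E_1$ and $E_2$ and never shrinks the complement of $E_1\cup E_2$. It is not even a free parameter, since the boundary condition on each piece forces the average of $Du_\eps$ over that piece to equal $A$. Reading \eqref{44} ``up to an $\eps$-small set'' proves a strictly weaker statement. The defect then propagates through your induction: there you need the sets $\{\dist(Du_{\eps'},A_i)\le\eps'\}$ to exhaust $\Omega$ in order to get the exact weights $\lambda\lambda_i s$ and $\lambda\lambda_i(1-s)$ for the new atoms.

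The missing idea is an \emph{exact} model construction in which even the boundary layer has gradients close to $A_1$ or $A_2$, used together with a \emph{countable} covering.

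Write $A_1-A_2=a\otimes n$ with $|n|=1$. For small $\eta>0$, let $h(x)=\min\{(1-\lambda)(x,n),\lambda(1-(x,n))\}-\eta|(x,n^\perp)|$ on the kite $P=\{h>0\}$. (In higher dimension, replace $|(x,n^\perp)|$ by a polyhedral norm of the projection of $x$ on $n^\perp$.) Then:
\begin{itemize}
\item $h=0$ on $\partial P$;
\item $Dh\in\{(1-\lambda)n\pm\eta n^\perp,\,-\lambda n\pm\eta n^\perp\}$, so $A+a\otimes Dh$ lies within $\eta|a|$ of $A_1$ or of $A_2$;
\item a direct area computation shows that these two alternatives occupy exactly the fractions $\lambda$ and $1-\lambda$ of $P$.
\end{itemize}
Now cover $\Omega$ up to a null set by countably many disjoint small copies $x_k+r_kP$ (Vitali). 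Set $u_\eps(x)=Ax+b+r_k\,a\,h((x-x_k)/r_k)$ on each copy and $u_\eps=Ax+b$ elsewhere. This gives \eqref{or:1}--\eqref{44} with exact equality. The resulting $u_\eps$ is piecewise affine on countably many pieces, which is the meaning used in \cite{SMVS}; finitely many pieces cannot work for a general $\Omega$.

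With this base step, your inductive step goes through essentially as you wrote it. Apply the exact construction, with $B'=B+(A_i'-A_i)$ and $C'=C+(A_i'-A_i)$, on an open subset of the $A_i$-pieces of measure exactly $\lambda|\Omega_i|$.

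Note that the paper gives no proof of this proposition; it simply quotes \cite[Lemma 3.2]{SMVS}. The argument there is this induction on the order of the laminate, with an exact rank-one base step.
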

It is not hard to show that the family $u_\eps$ of Proposition \ref{ind} generates the homogeneous Young measure $\nu = \sum_{i = 1}^{r}\lambda_i\delta_{A_i}$.\ In order to construct a sequence $(\psi_n)_n$ having properties \eqref{item:conv}-\eqref{item:dist}-\eqref{item:e}, it is therefore sufficient to construct a sequence of laminates of finite order $(\nu_n)_n$ having the following properties:
\begin{equation}\label{e:int1}
	\sup_n\int_{\R^{3\times 2}} |\pi_1(X)|^{2p} + |\pi_2(X)|^p d\nu_n < + \infty, \quad \forall p <2,
\end{equation}
where we have used the notation $\pi_i$ introduced in Section \ref{sec:not},
\begin{equation}\label{e:int2}
	\int_{\R^{3\times 2}}\dist(X,K_{e_1}\cup K_{e_2})d\nu_n \to 0,
\end{equation}
and
\begin{equation}\label{e:int3}
	\int_{\R^{3\times 2}}\dist(X,K_{e_i})d\nu_n \ge c > 0,\quad \forall n \in \N,\quad i =1,2.
\end{equation}
Having done so, the existence of the sequence $(\psi_n)_n$ is achieved through Proposition \ref{ind} in conjunction with a diagonal argument. Each $(\nu_n)_n$ is constructed following Faraco's staircase procedure, namely $\nu_{n + 1}$ is obtained from $\nu_n$ by \emph{substituting} a Dirac's delta inside the support of $\nu_n$ with a laminate of finite order $\mu_n$ (a step of the staircase). The construction of $\mu_n$ is the content of Lemma \ref{lem:Zn}, and after that we check that $\nu_n$ enjoys properties \eqref{e:int1}-\eqref{e:int2}-\eqref{e:int3}.

\subsection{Conclusion of the proof of Theorem \ref{t:e}: construction of the staircase laminate}\label{sec:counter1}

We start by constructing $\mu_n$.

\begin{lemma}\label{lem:Zn}
Given an increasing sequence $(a_n)_n \subset \R^+$, set
\[
Z_n = \left(\begin{array}{cc} 0 \\ a_n\id \end{array}\right).
\] 
Then, the discrete measure
\[
\mu_n = \alpha_n\delta_{X_n^1} + \beta_n\delta_{X_n^2} + \gamma_n\delta_{Y_n^1} + \delta_n\delta_{Y_n^2} + \lambda_n\delta_{Z_{n+1}}
\]
with
\[
X_n^1 = \left(\begin{array}{cc} \sqrt{a_n}e_1 \\ a_ne_1\otimes e_1\end{array}\right), \quad X_n^2 = \left(\begin{array}{cc} -\sqrt{a_n}e_1 \\ a_ne_1\otimes e_1\end{array}\right), \quad Y_n^1 = \left(\begin{array}{cc} \sqrt{a_{n+1}}e_2 \\ a_{n+1}e_2\otimes e_2\end{array}\right), \quad Y_n^2 = \left(\begin{array}{cc} -\sqrt{a_{n+1}}e_2 \\ a_{n+1}e_2\otimes e_2\end{array}\right)
\]
and
\[
\alpha_n = \beta_n = \frac{1}{2}\left(1- \frac{a_n}{a_{n + 1}}\right),\quad \gamma_n = \delta_n = \frac{a_n}{a_{n + 1}}\frac{1}{2}\left(1- \frac{a_n}{a_{n + 1}}\right),\quad \lambda_n = \left(\frac{a_n}{a_{n + 1}}\right)^2
\]
is a laminate of finite order with barycenter $Z_n$.
\end{lemma}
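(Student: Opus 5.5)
We construct $\mu_n$ from $\delta_{Z_n}$ by four elementary splittings, as in Definition \ref{def:es}. We check at each step that the two new atoms are rank-one connected and average to the atom being split. Set $r \doteq a_n/a_{n+1}\in(0,1)$; this uses that $(a_n)_n$ is increasing, and we take it strictly increasing so that $r<1$. For the rank-one checks we view $X=\left(\begin{array}{c} x\\ A\end{array}\right)\in\R^{3\times 2}$ column by column. A difference supported in a single column of $\R^{3\times 2}$ has rank one.

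\textbf{Step 1.} Introduce
\[
Q_n\doteq \left(\begin{array}{c}0\\ a_ne_1\otimes e_1\end{array}\right),\qquad R_n\doteq \left(\begin{array}{c}0\\ a_ne_1\otimes e_1+a_{n+1}e_2\otimes e_2\end{array}\right).
\]
Then $(1-r)Q_n+rR_n = \left(\begin{array}{c}0\\ a_n e_1\otimes e_1 + a_n e_2\otimes e_2\end{array}\right)=Z_n$. Moreover $R_n-Q_n=\left(\begin{array}{c}0\\ a_{n+1}e_2\otimes e_2\end{array}\right)$ is nonzero only in the second column, so it has rank one. Hence $(1-r)\delta_{Q_n}+r\delta_{R_n}$ is an elementary splitting of $\delta_{Z_n}$.

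\textbf{Step 2.} Split $Q_n=\tfrac12 X_n^1+\tfrac12 X_n^2$. Here $X_n^1-X_n^2=\left(\begin{array}{c}2\sqrt{a_n}e_1\\ 0\end{array}\right)$ is nonzero only in the first column, so it has rank one. This produces $X_n^1,X_n^2$ with weights $\tfrac12(1-r)=\alpha_n=\beta_n$.

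\textbf{Step 3.} Split $R_n = rZ_{n+1}+(1-r)S_n$, where $S_n\doteq\left(\begin{array}{c}0\\ a_{n+1}e_2\otimes e_2\end{array}\right)$. The barycenter is correct because $r\,a_{n+1}e_1\otimes e_1=a_ne_1\otimes e_1$ and the $e_2\otimes e_2$ parts agree. The difference $Z_{n+1}-S_n=\left(\begin{array}{c}0\\ a_{n+1}e_1\otimes e_1\end{array}\right)$ lives in the first column only, so it has rank one. This produces $Z_{n+1}$ with weight $r\cdot r=\lambda_n$ and $S_n$ with weight $r(1-r)$.

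\textbf{Step 4.} Split $S_n=\tfrac12Y_n^1+\tfrac12Y_n^2$. Here $Y_n^1-Y_n^2=\left(\begin{array}{c}2\sqrt{a_{n+1}}e_2\\ 0\end{array}\right)$ sits in the second column only, so it has rank one. This gives weights $\tfrac12 r(1-r)=\gamma_n=\delta_n$.

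The four measures obtained successively are all laminates of finite order by Definition \ref{def:es}, and the last one is exactly $\mu_n$. Its barycenter is $Z_n$, since barycenters are preserved under each splitting. The only point needing care is the choice of the intermediate matrices $Q_n,R_n,S_n$ so that every difference is confined to a single column. After that, the argument reduces to the four barycenter checks above.
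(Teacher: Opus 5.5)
Your proposal is correct and follows the paper's proof exactly. Your intermediate matrices $Q_n$, $R_n$, $S_n$ are the paper's $X_n$, $E_n$, $Y_n$, and you use the same four elementary splittings, in the directions $(0;e_2\otimes e_2)$, $(e_1;0)$, $(0;e_1\otimes e_1)$, $(e_2;0)$, with the same weights. Your remark that $r=a_n/a_{n+1}$ must lie in $(0,1)$ is a minor point the paper leaves implicit; the case $a_n=a_{n+1}$ is trivial anyway, since then $\mu_n=\delta_{Z_n}$.
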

\begin{proof}
We start with $\delta_{Z_n}$ and we split it in direction 
\[
\left(\begin{array}{cc} 0 \\ e_2\otimes e_2 \end{array}\right)
\]
to obtain
\[
\mu_n^1 = \alpha_n^1\delta_{X_n} + \alpha_n^2\delta_{E_n}, \text{ where } X_n = \left(\begin{array}{cc} 0 \\ a_ne_1\otimes e_1\end{array}\right) \text{ and } E_n = \left(\begin{array}{cc} 0 \\ a_ne_1\otimes e_1 + a_{n + 1}e_2\otimes e_2\end{array}\right).
\]
 
The resulting weights are
\[
\alpha_n^1 = 1- \frac{a_n}{a_{n + 1}} \text{ and } \alpha_n^2 = \frac{a_n}{a_{n +1}}.
\]
On one hand, we split $\delta_{X_n}$ in the rank-one direction $R$ into the sum
\[
\frac{1}{2}\left(\delta_{X_n^1} + \delta_{X_n^2}\right), \text{ where } R = \left(\begin{array}{cc} e_1 \\ 0 \end{array}\right).
\]
On the other, we split $\delta_{E_n}$ in the rank-one direction $R'$ to obtain a simple laminate of the form
\[
\beta_n^1\delta_{Y_n} +\beta_n^2\delta_{Z_{n + 1}}, \text{ where } R' = \left(\begin{array}{cc} 0 \\ e_1\otimes e_1 \end{array}\right) \text{ and } Y_n =  \left(\begin{array}{cc} 0 \\ a_{n +1}e_2\otimes e_2 \end{array}\right).
\]
A direct computation yields
\[
\beta_n^1 = 1-\frac{a_n}{a_{n + 1}}\text{ and }\beta_n^2 = \frac{a_n}{a_{n + 1}}.
\]
Up to now, we have split $\delta_{Z_{n}}$ in the four-point laminate
\[
\frac{1}{2}\alpha_{n}^1\delta_{X_n^1} + \frac{1}{2}\alpha_{n}^1\delta_{X_n^2} + \alpha_n^2\beta_n^1\delta_{Y_n} + \alpha_n^2\beta_n^2\delta_{Z_{n+1}}.
\]
To conclude, we simply split $\delta_{Y_n}$ into the sum
\[
\frac{1}{2} (\delta_{Y_{n}^1}+\delta_{Y_{n}^2}), \text{ following the rank-one direction } \left(\begin{array}{cc} e_2 \\ 0 \end{array}\right),
\]
and we conclude the construction.
\end{proof}

Having defined the steps $(\mu_n)_n$, we are ready to define the staircase laminates: let $\nu_n$ be defined inductively as $\nu_1 \doteq \mu_1$ and, for $n \ge 1$,
\[
\nu_{n + 1} \doteq \nu_n -p_{n}\delta_{Z_{n+1}} + p_{n}\mu_{n+1}, \quad \text{ where } p_n =\prod_{i = 1}^{n }\lambda_i = \left(\frac{a_1}{a_{n + 1}}\right)^2, p_0 \doteq 1.
\]
 
We choose $a_n \doteq 2^n$. A key feature of the definition of $\nu_{n + 1}$ is that we can compute the value of
\[
\int_{\R^{3\times 2}}f(X)d\nu_n(X)
\]
using a telescoping sum, for any $f \in C^0(\R^{3\times 2})$:
\begin{align*}
	\int_{\R^{3\times 2}}f(X)d\nu_{n + 1} - \int_{\R^{3\times 2}}f(X)d\nu_{1} &= \sum_{i = 1}^n\left(\int_{\R^{3\times 2}}f(X)d\nu_{i + 1} -\int_{\R^{3\times 2}}f(X)d\nu_{i}\right)\\
	&= \sum_{i = 1}^n\left(p_i\int_{\R^{3\times 2}}f(X)d\mu_{i + 1} - p_if(Z_{i + 1})\right)\\
	&= \sum_{i = 1}^np_i\left(\alpha_{i + 1}f(X^1_{i + 1})+\beta_{i + 1}f(X_{i + 1}^2) +\gamma_{i + 1}f(Y^1_{i + 1})+\delta_{i + 1}f(Y_{i + 1}^2) \right)\\
	&\quad\quad +p_{n + 1}f(Z_{n + 2})-p_{1}f(Z_2).
\end{align*}
Hence, using that $\nu_1 = \mu_1$,
\begin{equation}\label{e:exact}
	\begin{split}
	\int_{\R^{3\times 2}}f(X)d\nu_{n + 1} &= \sum_{i = 1}^{n+1}p_{i-1}\left(\alpha_{i}f(X^1_{i})+\beta_{i}f(X_{i}^2) +\gamma_{i}f(Y^1_{i})+\delta_{i}f(Y_{i}^2)\right) + p_{n + 1}f(Z_{n + 2}).
	\end{split}
\end{equation}
With this formula, it is rather simple to rewrite \eqref{e:int1}-\eqref{e:int2}-\eqref{e:int3}. Let us start with \eqref{item:conv}. We observe that on the support of $\nu_n$, the quantities 
\[
|\pi_1(X)|^{2p} + |\pi_2(X)|^p \text{ and } |\pi_2(X)|^p
\]
are comparable with constants independent of $n$.\ Hence it suffices to check that
\begin{equation}\label{e:int4}
	\sup_n\int_{\R^{3\times 2}} |\pi_2(X)|^p d\nu_n \le C(p), \text{ for all }p<2.
\end{equation}
Let $f(X) = |\pi_2(X)|^p$. We further notice that for all $i \in \N$, $f(X_i^1) = f(X_i^2) = a_i^p$ and  $f(Y_i^1) = f(Y_i^2) = a_{i + 1}^p$. Therefore, using \eqref{e:exact} we have
\begin{equation}\label{e:seq1}
	\begin{split}
\int_{\R^{3\times 2}}|\pi_2(X)|^pd\nu_{n + 1} &= \sum_{i = 1}^{n+1}\left(\frac{a_1}{a_{i}}\right)^2\left(\left(1- \frac{a_i}{a_{i + 1}}\right)a_i^p + a_i\left(1- \frac{a_i}{a_{i + 1}}\right)a_{i + 1}^{p-1}\right) + 2^{p/2}\frac{a_1^2}{a_{n + 2}^2}a_{n + 2}^p\\
& \quad \lesssim \sum_{i = 1}^{n+1} 2^{(p-2)i} + 2^{(p-2)(n + 1)}.
\end{split}
\end{equation}
The symbol $a \lesssim b$ means that there is a constant $C > 0$ (independent of $n$) such that $a \le Cb$.\ We now show \eqref{item:dist}.\ Let $f(X) = \dist(X,K_{e_1} \cup K_{e_2})$.\ We have $f(X_n^1) = f(X_n^2) = f(Y_n^1) = f(Y_n^2) = 0$.\ Hence, 
\begin{equation}\label{e:seq2}
\int_{\R^{3\times 2}}\dist(X,K_{e_1} \cup K_{e_2})d\nu_{n + 1} = p_{n + 1}\dist(Z_{n + 2},K_{e_1} \cup K_{e_2}) \le p_{n + 1}|Z_{n + 2}| = \frac{a_1^2}{a_{n+2}}\sqrt{2} \lesssim 2^{-n}.
\end{equation}
Finally, we show \eqref{e:int3}. Let us only consider the case $f(X) = \dist(X,K_{e_1})$, the other one being completely analogous.\ We have $f(X_n^1) = f(X_n^2) = 0$ for all $n$.\ Furthermore, since for any $s \in \R$ and any $n$ we have
\[
\left|Y_n^1-\left(\begin{array}{c}se_1 \\ s^2e_1\otimes e_1\end{array}\right)\right|^2 = \left|Y_n^1\right|^2 +\left|\left(\begin{array}{c}se_1 \\ s^2e_1\otimes e_1\end{array}\right)\right|^2 \ge \left|Y_n^1\right|^2 = a_{n + 1} + a_{n+1}^2 \ge a_{n + 1}^2,
\] 
we find $\dist(Y_n^1,K_{e_1})\ge a_{n + 1}, \forall n \in \N$.\ Hence, using again \eqref{e:exact}:
\begin{equation}\label{e:seq3}
	\begin{split}
		\int_{\R^{3\times 2}}\dist(X,K_{e_1})d\nu_{n + 1} &= \sum_{i = 1}^{n+1}p_{i-1}\left(\gamma_{i}\dist(Y^1_{i},K_{e_1})+\delta_{i}\dist(Y_{i}^2,K_{e_1})\right) + p_{n + 1}\dist(Z_{n + 2},K_{e_1}) \\
		&\ge\frac{1}{8}\sum_{i = 1}^{n+1}p_{i-1}a_{i + 1} = \frac{1}{8}\sum_{i =1}^{n+1}\frac{a_1^2}{a_i^2}a_{i + 1} = \sum_{i = 1}^{n+1} 2^{-i} \ge 2^{-1}. 
	\end{split}
\end{equation}
Now \eqref{e:seq1}-\eqref{e:seq2}-\eqref{e:seq3} show that $(\nu_n)_n$ fulfills \eqref{e:int1}-\eqref{e:int2}-\eqref{e:int3} respectively, and thus prove Theorem \ref{t:e}. \qed
 
\appendix

\section{Chain rule in $L^p$ spaces}\label{ABCp}

In this section we give a proof of Theorem \ref{chain}.\ We will make some additional assumptions to simplify the exposition without loss of generality:

\begin{itemize}
	\item We assume $r < +\infty$, which implies by \eqref{bounds} that $p > 2$. The case $r = + \infty$ and $p = 2$ was recently showed in \cite{GUS};
	\item since the result is local, we can assume that $\Omega = B_1$,\ $u \in L^p(B_1,\R^2)$ and $\beta_i \in L^r(B_1)$ for all $i$;
	\item we can employ \eqref{p1} to infer the existence of $\psi \in W^{1,p}(B_1)$ such that
	\begin{equation}\label{u}
		u = D^\perp \psi \text{ a.e. in }B_1.
	\end{equation}
\end{itemize}

The strategy and the vast majority of the details is the same as \cite{Alberti2014, Bianchini2016, GUS}, and we do not claim any originality in this section. As in \cite{Alberti2014}, we show the following result, from which Theorem \ref{chain} readily follows:

\begin{prop}\label{const}
	Let $\beta \in L^r(B_1)$ and $u \in L^p(B_1,\R^2)$, for $r,p \in [1,+\infty]$.\ Assume \eqref{bounds}-\eqref{u} hold. Then:
	\begin{equation}\label{pde}
		\dv(\beta u) = 0
	\end{equation}
	if and only if, for a.e. $t \in \R$, $\beta$ is constant on every connected component of $\psi^{-1}(\{t\})$.	
\end{prop}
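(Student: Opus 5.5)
We follow the Alberti--Bianchini--Crippa strategy of \cite{Alberti2014}, adapted to $L^p$ integrability. The first step is to reduce the PDE to a family of one-dimensional statements along the level sets of $\psi$. For $\eta \in C^\infty_c(B_1)$ we have $\dv(\beta u)=0$ if and only if $\int \beta\,(D^\perp\psi, D\eta)\,dx = 0$. By the coarea formula for $\psi\in W^{1,p}$ with $p>2$, this integral can be rewritten as
\[
\int_{\R}\int_{\psi^{-1}(t)} \beta\,(\tau, D\eta)\,d\mathcal{H}^1\,dt ,
\]
where $\tau = D^\perp\psi/|D\psi|$ is the unit tangent to the level set. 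The pairing is legitimate because $|\beta||D\psi| \in L^1$: indeed $\tfrac1r+\tfrac1p\le 1-\tfrac1p<1$.

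The second step uses $p>2$, which gives $\psi \in C^{0,1-2/p}$ by Morrey. Following \cite{Alberti2014}, for a.e.\ $t$ the level set $\psi^{-1}(t)$ is then a countable union of disjoint closed rectifiable simple curves (or arcs ending on $\partial B_1$), with $D\psi \neq 0$ $\mathcal{H}^1$-a.e.\ on it. This is proved via the coarea formula together with weak Sard-type arguments. To run this in the $W^{1,p}$ setting, I would use that $\int_{\R}\mathcal{H}^1(\psi^{-1}(t)\cap B)\,dt = \int_B|D\psi|$ is finite. I would also use that the critical set $\{D\psi=0\}$ maps to a null set of values on its Lebesgue points (weak Sard), and that continuity of $\psi$ makes $\psi^{-1}(t)$ a compact set of finite $\mathcal{H}^1$ measure. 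Such a set splits into countably many rectifiable connected components, and these are curves by a structure theorem for connected sets of finite length in the plane. We also need $\beta$ to be defined on a.e.\ level set: since $\int_{\R}\int_{\psi^{-1}(t)}|\beta||D\psi|^{-1}\cdots$ involves a critical-set issue, I would instead use $\int_{\R}\int_{\psi^{-1}(t)}|\beta|\,d\mathcal{H}^1\,dt = \int|\beta||D\psi| < \infty$. This shows that $\beta$ restricted to $\psi^{-1}(t)$ is $\mathcal{H}^1$-integrable for a.e.\ $t$.

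The third step is the localization, and I expect it to be the main obstacle. Because of the countably many components, the test function $\eta$ alone does not separate them. Following \cite{Alberti2014}, I would replace $\eta$ by test functions of the form $\eta\, h(\psi)$ and by functions adapted to monotone components. Testing with $h(\psi)\eta$, the extra term $h'(\psi)\,\beta\,(D^\perp\psi, D\psi)$ vanishes identically. Letting $h$ range over a dense family of functions then gives, for a.e.\ $t$,
\[
\int_{\psi^{-1}(t)}\beta\,(\tau,D\eta)\,d\mathcal{H}^1=0 \qquad \text{for all } \eta .
\]
To isolate individual components $C$ of $\psi^{-1}(t)$, one approach is to choose $\eta$ equal to $1$ near $C$ and vanishing near the other components of positive length. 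A cleaner route is the \cite{Bianchini2016} device of building a Lipschitz function whose level sets are unions of components, which yields the identity on each connected component $C$ separately. Once the identity holds on each $C$, let $\gamma$ be an arclength parametrization of $C$. The identity becomes
\[
\int \beta(\gamma(s))\,\tfrac{d}{ds}\eta(\gamma(s))\,ds=0 ,
\]
so the distributional derivative of $\beta\circ\gamma$ vanishes and $\beta$ is constant on $C$. The step is delicate because the exceptional null sets of $t$ must be chosen uniformly for a countable dense family of $\eta$, and because of the endpoints on $\partial B_1$; for the latter, compactly supported $\eta$ suffices.

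The converse is direct. If $\beta$ is constant on every component of a.e.\ level set, then each inner integral in the coarea representation is a sum over components of a constant times $\int_C (\tau,D\eta)\,d\mathcal{H}^1$. Each such integral vanishes, since it is the integral of a derivative along a closed curve or along an arc whose endpoints lie where $\eta=0$. Dominated convergence over the countably many components is justified by the integrability $\int|\beta||D\psi|<\infty$.
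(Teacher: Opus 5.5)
Your outline follows the paper's: coarea formula, structure of level sets, testing with $h(\psi)\eta$, localization to components, a one-dimensional argument along each curve, and the converse via coarea. However, the step that carries the whole forward implication is unjustified. A symptom of this is that your argument never uses \eqref{bounds} beyond $\frac1r+\frac1p<1$. The equation $\dv(\beta D^\perp\psi)=0$ is only known against $C^\infty_c$ test functions, whereas $h(\psi)\eta$ is merely in $W^{1,p}_0\cap C^0$. That $h'(\psi)\beta(D^\perp\psi,D\psi)$ vanishes pointwise is irrelevant until you know this test function is admissible. To get there you must test with $h(\psi_\eps)\eta$, where $\psi_\eps\to\psi$ in $W^{1,p}$ are smooth. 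You then have to show that the error $\int\beta\, h'(\psi_\eps)\eta\,(D^\perp\psi,D\psi_\eps-D\psi)\,dx$ tends to $0$. This requires $\beta D\psi\in L^{p'}$, i.e.\ $\frac1r+\frac2p\le 1$, which is exactly \eqref{bounds}. The paper does this via H\"older, using $\beta|D\psi|^2\in L^1$ and $2r'\le p$, and says this is precisely where the assumption enters. With only $\beta D\psi\in L^1$, as in your proposal, this term is not controlled, so the identity $\int_{\psi^{-1}(\{t\})}\beta(\tau,D\eta)\,d\mathcal{H}^1=0$ for a.e.\ $t$ does not follow.

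Several structural facts are also asserted rather than proved, and the paper needs genuine arguments for them.
\begin{itemize}
\item A compact set of finite length need not have countably many components: it may have uncountably many point components. You need that, for a.e.\ $t$, their union is $\mathcal{H}^1$-null. This is Corollary \ref{lipcharcomp}\eqref{it:count1}, proved in Step 6 there using that $\tau\,\mathcal{H}^1\llcorner\psi^{-1}(\{t\})$ is divergence free. Without it, your converse, which writes the inner integral as a sum over curve components, misses part of the level set.
\item You tacitly take $\tau=\gamma'$, whereas a priori $\tau=\sigma_C\gamma'$ with $\sigma_C$ a measurable $\{\pm1\}$-valued function. Constancy of $\sigma_C$, and the fact that non-closed arcs cannot end inside the domain, are Step 5 of that corollary, again derived from the divergence-free property. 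Without them, ``integral of a derivative along a closed curve or an arc ending where $\eta=0$'' is not available.
\item Passing from $\int(\beta\circ\gamma)\frac{d}{ds}(\eta\circ\gamma)\,ds=0$ for all smooth $\eta$ to constancy of $\beta\circ\gamma$ is not just ``the distributional derivative vanishes''. The curve $\gamma$ is only Lipschitz, and the functions $\eta\circ\gamma$ do not exhaust the test functions on the parameter interval; the paper invokes \cite[Proposition 2.17]{GUS} here.
\item Your first localization idea (a cutoff equal to $1$ near $C$ and $0$ near the other components) fails when components accumulate on $C$. The paper instead uses open sets $U_n\downarrow C$ with $\partial U_n\cap\psi^{-1}(\{t\})=\emptyset$.
\item The Sard-type claim that the critical set is mapped to a null set of values fails in general, already for $C^1$ functions (Whitney-type examples). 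It is not needed, though: the coarea formula already gives $\mathcal{H}^1(\psi^{-1}(\{t\})\cap S)=0$ for a.e.\ $t$.
\end{itemize}
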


To show this, we need to recall the following characterization of a.e. level set of a continuous Sobolev function. Notice that the continuity of $\psi$ solving \eqref{u} is given by \eqref{bounds} and our assumption $r < +\infty$. The following is taken from \cite{Ntalampekos2020}.

\begin{theorem}\label{char}
	Let $\Omega$ be an open set and let $\varphi \in W_{\loc}^{1,p}(\Omega)$ be a continuous function. Then, for a.e. $t \in \R$, $\varphi^{-1}(\{t\})$ has locally finite $\mathcal{H}^1$ measure and each connected component of $\varphi^{-1}(\{t\})$ is either a point, a Jordan curve or is homeomorphic to an interval.
\end{theorem}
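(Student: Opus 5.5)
The plan is to combine the coarea formula, which gives the measure-theoretic part, with Moore's triod theorem and the classification of triod-free Peano continua, which give the topological part. Throughout, $\varphi$ denotes its continuous representative, so every level set $\varphi^{-1}(\{t\})$ is relatively closed in $\Omega$, and different levels are pairwise disjoint.

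\emph{Step 1 (finite length).} Fix an exhaustion of $\Omega$ by compact sets $K_m$. I would use the coarea inequality for continuous Sobolev functions (Mal\'y--Swanson--Ziemer), valid for $\varphi\in W^{1,1}_{\loc}$ and hence for every $p\ge 1$:
\[
\int_{\R}\mathcal{H}^1(\varphi^{-1}(\{t\})\cap K_m)\,dt\le \int_{K_m}|D\varphi|\,dx<+\infty .
\]
Hence, for each $m$, $\mathcal{H}^1(\varphi^{-1}(\{t\})\cap K_m)<\infty$ outside a null set of $t$. Taking the countable union of these exceptional sets shows that, for a.e. $t$, the level set has locally finite $\mathcal{H}^1$ measure.

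\emph{Step 2 (no branching for a.e. $t$).} A triod is a union of three arcs sharing exactly one common endpoint. By Moore's triod theorem, any family of pairwise disjoint triods in $\R^2$ is at most countable. Since distinct level sets are disjoint, choosing one triod in each level set that contains one gives a disjoint family. Therefore only countably many $t$ have a level set containing a triod, and I discard these values too.

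\emph{Step 3 (topological classification).} Fix a good $t$ and a connected component $C$ of $\varphi^{-1}(\{t\})$.
\begin{enumerate}[(a)]
\item If $C$ is compact, it is a continuum of finite $\mathcal{H}^1$ measure. Such continua are arcwise connected and locally connected, i.e. Peano continua (Eilenberg--Harrold, or the standard Go\l\k{a}b-type argument). A Peano continuum with no triod is a point, an arc, or a simple closed curve. An arc is homeomorphic to an interval and a simple closed curve is a Jordan curve, so this case is settled.
\item If $C$ is not compact, it must leave every $K_m$. I would exhaust $C$ by the components of $C\cap K_m$ containing a fixed base point. By (a), after intersecting with slightly enlarged compact sets, each piece is an arc and never a closed curve, since $C$ cannot close up without being compact. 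These arcs are nested, and their increasing union is an arc-like set without triods. I would conclude that this union, and hence $C$, is homeomorphic to an open or half-open interval, an increasing union of nested arcs extending each other at the ends.
\end{enumerate}

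\emph{Main obstacle.} The delicate point is (b): local connectedness and the arc structure must be transferred to noncompact, relatively closed components. In particular, one has to check that the components of $C\cap K_m$ really fill out $C$ and do not accumulate inside $\Omega$ in a pathological way. Local finiteness of length should prevent such accumulation, since infinitely many disjoint subarcs crossing an annulus in a compact set each have length at least the width of the annulus. I would try that crossing-length estimate first.
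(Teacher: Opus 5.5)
The paper does not prove this statement; it imports it from \cite{Ntalampekos2020}, so there is no in-paper argument to compare with. Your Steps 1, 2 and 3(a) are sound and are the usual ingredients for such results:
\begin{itemize}
\item the coarea inequality gives locally finite length for a.e.\ $t$;
\item Moore's theorem on disjoint triods discards countably many levels;
\item a continuum of finite $\mathcal{H}^1$ measure is a Peano continuum, and a triod-free one is a point, an arc or a simple closed curve.
\end{itemize}

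The gap is Step 3(b), which as written is not yet a proof. First, the increasing union of the components $C_m$ of $C\cap K_m$ through a base point need not exhaust $C$. Take $C$ to be the closure in $\R^2$ of the graph of $x\mapsto x^{-1}\sin(1/x)$, $x>0$, a connected closed set of locally infinite length. For every $m$, the component of $C\cap\overline{B_m}$ through the origin is only $\{0\}\times[-m,m]$. Second, even when the union does exhaust $C$, an increasing union of nested arcs is only a continuous injective image of an interval. Its subspace topology may differ if an end accumulates at a point of $\Omega$. Both failures are excluded only by local finiteness of length, so your crossing idea is the right one, but it has to be carried out. Likewise, ``$C$ cannot close up without being compact'' needs an argument.

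The cleanest fix is to prove local connectedness directly and skip the exhaustion. Let $C$ be a nondegenerate component, $x\in C$, and $r>0$ small with $\overline{B_r(x)}\subset\Omega$ and $C\not\subset\overline{B_r(x)}$.
\begin{itemize}
\item Every component of the compact set $Y=C\cap\overline{B_r(x)}$ meets $\partial B_r(x)$. Otherwise, since components of compact spaces coincide with quasi-components, it would lie in a relatively clopen subset of $Y$ contained in $B_r(x)$. That subset would be clopen in the connected set $C$, a contradiction.
\item Hence every component of $Y$ meeting $B_{r/2}(x)$ has $\mathcal{H}^1$ measure at least $r/2$. Since $\mathcal{H}^1(Y)<\infty$, there are finitely many of them.
\item The component containing $x$ is therefore a compact connected neighbourhood of $x$ in $C$, of finite length and without triods. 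By 3(a) it is an arc or a simple closed curve.
\end{itemize}
So every point of $C$ has a neighbourhood homeomorphic to $(0,1)$ or $[0,1)$. Thus $C$ is a connected, second countable $1$-manifold with possibly empty boundary, hence homeomorphic to a circle or to a closed, half-open or open interval. This handles compact and noncompact components at once. It also justifies your claim in (b) that no $C_m$ is a closed curve when $C$ is noncompact: a subset of a connected $1$-manifold homeomorphic to a circle is open and closed, so it is the whole manifold.
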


We will need a slightly stronger result.

\begin{corollary}\label{lipchar}
	Let $U \subset \R^2$ be a bounded Lipschitz domain and $\psi \in W^{1,p}(U)$ for $p > 2$. Then:
	\begin{enumerate}
		\item for a.e. $t \in \R$, $\psi^{-1}(\{t\})$ has finite $\mathcal{H}^1$ measure;\label{cor11}
		\item for a.e. $t \in \R$, each connected component $C$ of $\psi^{-1}(\{t\})$ is either a point or admits a Lipschitz arc-length parametrization $\gamma$ such that either
\label{cor12}
		\begin{itemize}
			\item $\gamma: (a,b) \to C \subset U$, is injective on $(a,b)$ and can be extended to $a,b$ with $\gamma(a), \gamma(b) \in \partial U$ or
			\item $\gamma: [a,b] \to C \subset U$, $\gamma(a) = \gamma(b)$ and is injective on $[a,b)$;
			
		\end{itemize}
		\item for those $t$ and for each such curve, letting $D$ be the points of differentiability of $\psi$ and $S$ be the set of points of $D$ at which $D\psi = 0$, $\mathcal{H}^1(\psi^{-1}(\{t\})\cap (S^c \cap D)^c) = 0$ and $\gamma(s) \in D\cap S^c$ for a.e. $s$; \label{cor13}
		\item furthermore, up to a reparametrization of $\gamma$, \label{cor14}
		\begin{equation}\label{tangent1}
			\gamma'(s) = \frac{D^\perp\psi}{|D^\perp\psi|}(\gamma(s)) \quad \text{for a.e. }s \in [a,b].
		\end{equation}
		\item For a.e. $t$, let $\mathcal{C}_t$ be the collection of connected components of $\psi^{-1}(\{t\})$ which are curves, and let $\mathcal{C}_{t}^*$ be the collection of those which are points. Then, setting $B_t \doteq \cup_{C \in \mathcal{C}_t^*}C$, we have $\mathcal{H}^1(B_t) = 0$.\label{it:count}
	\end{enumerate}
\end{corollary}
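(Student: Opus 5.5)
I would deduce Corollary \ref{lipchar} from Theorem \ref{char} by extending $\psi$ to all of $\R^2$ and then using the coarea formula together with the fine properties of $W^{1,p}$ functions for $p>2$.

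\textbf{Setting up.} Since $U$ is a bounded Lipschitz domain, there is an extension $\tilde\psi \in W^{1,p}(\R^2)$ of $\psi$ with compact support. By Morrey's embedding, $\tilde\psi$ is continuous, in fact H\"older continuous. By Stepanov/Calder\'on--Zygmund, $\tilde\psi$ is differentiable a.e. and satisfies Lusin's (N) property. Theorem \ref{char} applied to $\tilde\psi$ on a large ball then shows that, for a.e. $t$, $\tilde\psi^{-1}(\{t\})$ has finite $\mathcal{H}^1$ measure, since it is compact outside the level $t=0$. Because $\psi^{-1}(\{t\})\subset\tilde\psi^{-1}(\{t\})$, this gives \eqref{cor11}.

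\textbf{Items \eqref{cor13} and \eqref{it:count}.} For \eqref{cor13} I would use the coarea formula
\[
\int_{\R}\mathcal{H}^1(E\cap\psi^{-1}(t))\,dt=\int_E|D\psi|\,dx .
\]
Apply it with $E=D^c$, which is Lebesgue-null, and with $E=S$, on which $|D\psi|=0$. In both cases the left-hand side vanishes, so for a.e. $t$ we get $\mathcal{H}^1(\psi^{-1}(t)\cap(D^c\cup S))=0$. Item \eqref{it:count} follows the same way. By Theorem \ref{char}, the level set minus $(D^c\cup S)$ is $\mathcal{H}^1$-rectifiable. At a point of $D\cap S^c$, the implicit function structure shows the level set near that point contains a curve through it. So a singleton component cannot lie in $D\cap S^c$, and hence $B_t\subset D^c\cup S$ up to an $\mathcal{H}^1$-null set.

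\textbf{Item \eqref{cor12}.} Each non-point component $C$ of $\tilde\psi^{-1}(t)$ is a Jordan curve or homeomorphic to an interval, and has finite length. Such a continuum admits a Lipschitz arc-length parametrization, since a continuum of finite $\mathcal{H}^1$ measure homeomorphic to an arc is rectifiable. For a component of $\psi^{-1}(t)$ inside $U$, intersect with $U$. The component is then a relatively open sub-arc of an arc or loop, with endpoints either in $\partial U$ or being endpoints of the arc. The open interval case needs care: an endpoint of the whole arc lying inside $U$ must be excluded, or else handled as the closed case. For this I would use that for a.e. $t$ the level set has no endpoints in the open set where the function is differentiable with nonzero gradient, since near such a point the level set is locally a graph crossing it. Because $\gamma(s)\in D\cap S^c$ for a.e. $s$, I expect endpoints in the interior to be negligible only along a null set of $t$. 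Making this rigorous is the \textbf{main obstacle}: the endpoints are single points and could be critical points. If that fails, I would allow a parametrization on a half-open or closed interval, or argue via the extension that arcs cannot terminate inside $U$.

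\textbf{Item \eqref{cor14}.} At a.e. $s$, the point $\gamma(s)$ lies in $D\cap S^c$ and $\gamma$ is differentiable with $|\gamma'|=1$. Differentiating $\psi(\gamma(s))=t$ gives $(D\psi(\gamma(s)),\gamma'(s))=0$. Hence $\gamma'=\pm D^\perp\psi/|D^\perp\psi|$. The sign is locally constant by the local graph structure at nondegenerate points, and connectedness plus orientation consistency gives a single sign. Reversing the parametrization if necessary yields \eqref{tangent1}.
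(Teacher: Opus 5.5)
There is a genuine gap, and it is exactly the point you flag as the main obstacle. You need to show that no component of a good level set ends inside $U$, i.e.\ that every non-point component of $\tilde\psi^{-1}(\{t\})$ is a closed curve. Your proposal does not do this, and the suggested route cannot work:
\begin{enumerate}
\item Arc endpoints are at most countably many points per level. They are therefore invisible to the coarea formula and to any statement like $\mathcal{H}^1(\psi^{-1}(\{t\})\cap(D\cap S^c)^c)=0$.
\item Knowing that an endpoint must be a critical or non-differentiability point does not make the set of bad $t$ null. Sard's theorem fails in this regularity: Whitney's $C^1$ function on $\R^2$ has an interval of critical values.
\item Allowing half-open intervals would not prove \eqref{cor12} as stated.
\end{enumerate}

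The same weakness affects \eqref{cor14}. At points of $D\cap S^c$, $\psi$ is only pointwise differentiable, so there is no implicit function theorem and no local graph structure. Moreover, the sign $\sigma$ in $\gamma'=\sigma\,\tau\circ\gamma$, with $\tau=D^\perp\psi/|D^\perp\psi|$, is only a measurable $\pm1$-valued function defined a.e. So ``locally constant plus connectedness'' has no content.

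For \eqref{it:count}, your claim that a singleton component cannot lie in $D\cap S^c$ is true, but the reason is topological, not implicit-function. Suppose the component of $x_0$ in $\psi^{-1}(\{t\})\cap\overline{B_r(x_0)}$ missed $\partial B_r(x_0)$. Then a Jordan curve in $B_r(x_0)\setminus\psi^{-1}(\{t\})$ surrounding it would meet both segments $\{x_0\pm s\nu:0<s<r\}$, where $\nu=D\psi(x_0)/|D\psi(x_0)|$. On these segments $\psi-t$ has opposite strict signs for small $r$, contradicting that $\psi-t$ has constant sign on the curve. With this repair, your derivation of \eqref{it:count} from \eqref{cor13} is valid and shorter than the paper's Step 6.

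The idea you are missing, which the paper uses, is that level sets carry a divergence-free orientation.
\begin{enumerate}
\item Test $\dv(D^\perp\psi)=0$ with $\varphi(\psi)\eta$ and apply the coarea formula. This gives, for a.e.\ $t$, $\int_{\psi^{-1}(\{t\})}(\tau,D\eta)\,d\mathcal{H}^1=0$ for all $\eta$.
\item Localize to each component $C$ using open sets $U_n\downarrow C$ whose boundaries avoid the level set. This yields \eqref{oncc}.
\item In the arc-length parametrization this reads $\int_a^b\sigma(s)(\gamma'(s),Dg(\gamma(s)))\,ds=0$ for all $g\in C^\infty_c(\R^2)$. Then \cite[Proposition 2.17]{GUS} forces $\sigma$ to be constant, which is \eqref{cor14}.
\item With $\sigma$ constant, the same identity gives $g(\gamma(b))=g(\gamma(a))$ for every $g$, so $\gamma(a)=\gamma(b)$.
\end{enumerate}
Hence every non-point component for the extension is a Jordan curve, and intersecting with $U$ gives exactly the two alternatives of \eqref{cor12}.

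A purely topological substitute also works, but it has to be run at a good interior point of the arc, not at its endpoints. Take $x_0=\gamma(s)\in D\cap S^c$. For small $\eps$, the normal segment $[x_0-\eps\nu,x_0+\eps\nu]$ meets the level set only at $x_0$. Its endpoints lie in $\{\psi>t\}$ and $\{\psi<t\}$ respectively, so by Janiszewski's theorem a single component of the compact set $\tilde\psi^{-1}(\{t\})$ separates them. That component must meet the segment, hence contain $x_0$, hence be the arc itself. But an arc cannot separate the plane.
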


In turn, Corollary \ref{lipchar} follows from the next Corollary, which applies to functions with compact support defined in the whole plane. To deduce Corollary \ref{lipchar} from the next result, simply extend $\psi$ to a function $\overline{\psi} \in W^{1,p}(\R^2)$ with compact support. The conclusion of Corollary \ref{lipchar} is then straightforward.

\begin{corollary}\label{lipcharcomp}
	Let $\psi \in W^{1,p}(\R^2)$ for $p > 2$ with compact support. Then:
	\begin{enumerate}
		\item for a.e. $t \in \R$, $\psi^{-1}(\{t\})$ has finite $\mathcal{H}^1$ measure;\label{cor1}
		\item for a.e. $t$, each connected component $C$ of $\psi^{-1}(\{t\})$ is either a point or a Jordan curve which admits a Lipschitz parametrization $\gamma: [a,b] \to C$ with $|\gamma'(s)| = 1$ for a.e. $s \in [a,b]$, $\gamma(a) = \gamma(b)$ and $\gamma$ injective on $[a,b)$\label{cor2};
		\item for those $t$ and for each such curve, letting $D$ be the points of differentiability of $\psi$ and $S$ be the set of points of $D$ at which $D\psi = 0$, $\mathcal{H}^1(\psi^{-1}(\{t\})\cap (S^c \cap D)^c) = 0$ and $\gamma(s) \in D\cap S^c$ for a.e. $s$; \label{cor3}
		\item furthermore, up to a reparametrization of $\gamma$, \label{cor4}
		\begin{equation}\label{tangent}
			\gamma'(s) = \frac{D^\perp\psi}{|D^\perp\psi|}(\gamma(s)) \quad \text{for a.e. }s \in [a,b].
		\end{equation}
		\item For a.e. $t$, let $\mathcal{C}_t$ be the collection of connected components of $\psi^{-1}(\{t\})$ which are curves, and let $\mathcal{C}_{t}^*$ be the collection of those which are points. Then, setting $B_t \doteq \cup_{C \in \mathcal{C}_t^*}C$, we have $\mathcal{H}^1(B_t) = 0$.\label{it:count1}
	\end{enumerate}
\end{corollary}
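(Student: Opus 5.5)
The plan is to derive Corollary \ref{lipcharcomp} from Theorem \ref{char}, together with the continuity of $\psi$ (Morrey, since $p>2$) and the coarea formula for Sobolev functions. Since $\psi$ is continuous with compact support, for every $t\neq 0$ the level set $\psi^{-1}(\{t\})$ is a compact subset of $\spt\psi$. We discard the null set $t=0$.

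\textbf{Step 1: finite $\mathcal H^1$ measure and a.e.\ differentiability.} The coarea formula
\[
\int_{\R}\mathcal H^1(\psi^{-1}(\{t\})\cap E)\,dt=\int_E|D\psi|\,dx
\]
holds for continuous $W^{1,p}$ functions with $p>2$; these are differentiable a.e.\ and satisfy Lusin's (N) property. Applying it with $E=\R^2$ shows that $\mathcal H^1(\psi^{-1}(\{t\}))<\infty$ for a.e.\ $t$, which is item \eqref{cor1}. Applying it with $E=D^c\cup S$ (points of non-differentiability or vanishing gradient) makes the right-hand side zero, since $|D^c|=0$ and $D\psi=0$ on $S$. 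Hence $\mathcal H^1(\psi^{-1}(\{t\})\cap(D\cap S^c)^c)=0$ for a.e.\ $t$, which is the first half of \eqref{cor3}.

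\textbf{Step 2: structure of the components.} By Theorem \ref{char}, for a.e.\ $t$ each component is a point, a Jordan curve, or homeomorphic to an interval. The component $C$ is a compact set with $\mathcal H^1(C)<\infty$, so it cannot be homeomorphic to an open or half-open interval; it must be a point, an arc, or a Jordan curve. I would exclude arcs for a.e.\ $t$ as follows. At a.e.\ $\mathcal H^1$ point of a level set we have $D\psi\neq0$, and near such a regular point the set $\{\psi=t\}$ separates the regions $\{\psi>t\}$ and $\{\psi<t\}$. An arc has endpoints, and near an endpoint the level set cannot be the boundary between $\{\psi>t\}$ and $\{\psi<t\}$. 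To make this rigorous, I would use that for a.e.\ $t$ the set $\{\psi>t\}$ has finite perimeter with $\partial^*\{\psi>t\}=\psi^{-1}(t)$ up to $\mathcal H^1$-null sets. The decomposition of a planar finite-perimeter set into Jordan boundaries (Ambrosio--Caselles--Masnou--Morel) then shows that the essential boundary is a union of rectifiable Jordan curves, so each nontrivial component is a Jordan curve. I expect this step to be the main obstacle: one must match topological components of the level set with the measure-theoretic Jordan boundaries, including ruling out degenerate configurations.

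\textbf{Step 3: parametrization, tangent and point components.} A Jordan curve of finite length admits an injective Lipschitz arc-length parametrization $\gamma:[a,b]\to C$ with $\gamma(a)=\gamma(b)$. By Step 1, $\gamma(s)\in D\cap S^c$ for a.e.\ $s$, because the arc-length parametrization pushes Lebesgue measure to $\mathcal H^1\llcorner C$. At such $s$, differentiating $\psi(\gamma(s))=t$ along the curve gives $(D\psi(\gamma(s)),\gamma'(s))=0$ with $|\gamma'|=1$, so $\gamma'(s)=\pm D^\perp\psi/|D^\perp\psi|$. The sign is constant along $C$ by continuity of the orientation of $\{\psi>t\}$ relative to the curve, so reversing the orientation if needed gives \eqref{tangent}. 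For item \eqref{it:count1}, the point components lie in $S\cup D^c$: at a point of $D\cap S^c$, the implicit structure shows the level set contains a nontrivial curve through that point. Their union therefore has $\mathcal H^1$-null intersection with the level set by Step 1, so $\mathcal H^1(B_t)=0$.
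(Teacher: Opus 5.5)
The genuine gap is in item (4). Once you have $\gamma'(s)=\sigma(s)\frac{D^\perp\psi}{|D^\perp\psi|}(\gamma(s))$ with $\sigma(s)\in\{-1,1\}$, you assert that $\sigma$ is constant ``by continuity of the orientation of $\{\psi>t\}$ relative to the curve''. There is no continuity to appeal to. $\sigma$ is only a measurable function, defined at a.e.\ $s$ through the pointwise gradient. The good points are interspersed with an $\mathcal{H}^1$-null, possibly dense, set where $\psi$ is not differentiable or $D\psi=0$. Nothing you have written prevents $\{\psi>t\}$ from lying inside $C$ along one set of positive length and outside along another, and excluding this is a global statement.

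The paper obtains it from the identity $\int_C(\tau,Dg)\,d\mathcal{H}^1=0$ for all $g\in C^\infty_c(\R^2)$, where $\tau=D^\perp\psi/|D^\perp\psi|$. This identity comes from testing $\dv(D^\perp\psi)=0$ with $\varphi(\psi)\eta$, applying the coarea formula, and localizing to $C$ through open sets $U_n\supset C$ with $\partial U_n\cap\psi^{-1}(\{t\})=\emptyset$. Rewritten as $\int_a^b\sigma(s)(\gamma'(s),Dg(\gamma(s)))\,ds=0$, it lets \cite[Proposition 2.17]{GUS} force $\sigma$ to be constant. Then $g(\gamma(b))=g(\gamma(a))$ for all $g$ also rules out arcs, so this single identity does the work of both your Step 2 and your orientation step.

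Within your own framework there are two ways to close the gap. One is to keep the orientations carried by the Ambrosio--Caselles--Masnou--Morel decomposition: $D\chi_{\{\psi>t\}}$ restricted to each Jordan curve $J$ of the decomposition equals $\pm D\chi_{\mathrm{int}(J)}$, so its density $D\psi/|D\psi|$ is a.e.\ a fixed-sign rotation of $\gamma'$ along $J$. The other is an actual separation argument in the closed disk $\overline{\mathrm{int}(C)}$. Suppose $\psi-t$ had opposite signs on the inner normal half-segments at two good points, and pick $a,b$ on them. Then $\psi^{-1}(\{t\})\cap\overline{\mathrm{int}(C)}$ separates $a$ from $b$, so by unicoherence one of its components does. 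That component cannot be $C$, hence misses $C$. But $C$ together with the two half-segments joins $a$ to $b$, a contradiction.

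The other steps hold up. The matching you left open in Step 2 closes in a few lines. Since $\partial^M\{\psi>t\}\subset\psi^{-1}(\{t\})$ and $P(\{\psi>t\})=\sum_J\mathcal{H}^1(J)$, each Jordan curve $J$ of the decomposition satisfies $\mathcal{H}^1(J\setminus\psi^{-1}(\{t\}))=0$. This set is relatively open in $J$, hence empty, so $J$ lies in a single component. That component cannot be an arc, since an arc contains no Jordan curve, and if it is a Jordan curve it equals $J$ (which is also what makes the orientation fix above apply to $C$). Since the $J$'s cover $\psi^{-1}(\{t\})$ up to an $\mathcal{H}^1$-null set and arcs have positive length, no component is an arc.

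Your treatment of item (5) is a genuinely different and much shorter route than the paper's Step 6. The paper proves $\tau^\perp\mathcal{H}^1\llcorner B_t=0$ through a covering by small sets $V_m$ with $\partial V_m\cap\psi^{-1}(\{t\})=\emptyset$, BV primitives and the Sobolev inequality. You only need that point components lie in $(D\cap S^c)^c$, and then the first half of item (3). However, pointwise differentiability gives no implicit-function structure, so justify the claim by a crossing argument instead. If $D\psi(x_0)\neq 0$ and $\nu=D\psi(x_0)/|D\psi(x_0)|$, then for small $r$, $\psi-t$ is positive on $x_0+[\delta,r]\nu$ and negative on $x_0-[\delta,r]\nu$. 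Hence in the half-annulus of $\overline{B_r(x_0)}\setminus B_\delta(x_0)$ bounded by these two segments, some component of $\psi^{-1}(\{t\})$ joins $\partial B_\delta(x_0)$ to $\partial B_r(x_0)$. A Hausdorff limit as $\delta\to 0$ then gives a nondegenerate continuum in $\psi^{-1}(\{t\})$ through $x_0$.
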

\begin{proof}[Proof of Corollary \ref{lipcharcomp}]
	The proof is divided into steps.\ In all the sets of values $t$ we will define in what follows, we always tacitly exclude $t = 0$.\
	
	\medskip

	\fbox{\emph{Step 1, choice of the good values $t$:}} Let $T_1$ be the set of values for which the conclusion of Theorem \ref{char} holds. For a.e. $t \in T_1$, $\psi^{-1}(\{t\})$ has locally finite $\mathcal{H}^1$ measure from Theorem \ref{char}.\ Since $\psi$ is compactly supported, the measure is actually finite. We let $T_2$ be the set of values $t$ constructed as follows.\ We have $|\R^2\setminus D| = 0$ since $p > 2$. We can then use the coarea formula, see \cite[Theorem 1.1]{MAL}, to write:
	\begin{equation}\label{coarea}
		\int_{\R^2}g|D\psi|dx = \int_{\R}\left(\int_{\psi^{-1}(\{t\})}g d\mathcal{H}^1\right)dt, \text{ for any } g \in L^1(\R^2) \text{ with } |g||D\psi| \in L^1(\R^2).
	\end{equation}
	Taking $g = \chi_{(S^c \cap D)^c}$, we readily find that for a.e. $t \in \R$, 
	\begin{equation}\label{measu}
		\mathcal{H}^1(\psi^{-1}(\{t\})\cap (S^c \cap D)^c) = 0.
	\end{equation}
	Call $T_2$ this set of values $t$. Finally, we need to define a set $T_3$.\ To do so, we start by testing the equation $\dv(D^\perp \psi)= 0$ with a test function given by $\Phi = \varphi(\psi) \eta$, for any $\varphi\in C^\infty_c(\R)$, $\eta \in C_c^1(\R^2)$. As $\psi \in W^{1,p}(\R^2)$, $\varphi(\psi) \eta \in W^{1,p}_0(\R^2)$ is a valid test function. We obtain, by means of the coarea formula,
	\begin{equation}\label{split}
		\begin{split}
			0 &= \int_{\R^2} \left(D^\perp\psi(x), D(\varphi(\psi)\eta)\right)dx  = \int_{\R^2} \varphi(\psi(x))\left(D^\perp\psi(x), D\eta(x)\right)dx\\
			& = \int_{\R^2\cap S^c} \varphi(\psi(x))\left(\frac{D^\perp\psi(x)}{|D^\perp\psi(x)|}, D\eta(x)\right)|D^\perp\psi(x)|dx\\
			& = \int_{\R}\varphi(t)\left(\int_{\psi^{-1}(\{t\})\cap S^c}\left(\frac{D^\perp\psi(x)}{|D^\perp\psi(x)|}, D\eta(x)\right)d\mathcal{H}^1(x)\right)dt\\
			& = \int_{\R}\varphi(t)\left(\int_{\psi^{-1}(\{t\})}\left(\frac{D^\perp\psi(x)}{|D^\perp\psi(x)|}, D\eta(x)\right)d\mathcal{H}^1(x)\right)dt.\\
		\end{split}
	\end{equation}
	In passing to the last inequality we used \eqref{measu}. Hence, there exists a set $G_{\eta}$ of values $t \in T_2$ with $\mathcal{H}^1(\R\setminus G_\eta) = 0$ for which
	\begin{equation}\label{psit}
		\int_{\psi^{-1}(\{t\})}\left(\frac{D^\perp\psi(x)}{|D^\perp\psi(x)|}, D\eta(x)\right)d\mathcal{H}^1(x) = 0.
	\end{equation}
	To get rid of the dependence on $\eta$ of $G_{\eta}$, we can take $R$ sufficiently large such that $\spt(\psi) \subset B_{R/2}$ and consider the separable space $X = C^1_R(\R^2)$ of $C^1(\R^2)$ functions with support in $B_R$, endowed with the topology induced by the norm $\|f\|_{C^0(\R^2)} + \|Df\|_{C^0(\R^2,\R^{2})}$.\ Taking a countable and dense set of $\eta_i \in X$ and the corresponding sets $G_{\eta_i}$ of values $t$ for which \eqref{psit} holds, we let $T_3 \doteq \bigcap_{i\in\N}G_{\eta_i}$. Then, $T_3$ is a set of full measure in $\R$ such that for all $t \in T_3$ and $\eta \in C^\infty_c(\R^2)$, \eqref{psit} holds.\	We can then set $T = T_1\cap T_2 \cap T_3$, which is still of full measure in $\R$.\ This is the set of \emph{good} values of $t$ that we will consider from now on.\ Notice that the intersection with $T_2$ is redundant since $T_3 \subset T_2$, but we added it for clarity.\ We need one last property of the values $t \in T$, namely that, for all $t \in T$ and for all connected component $C \subset \psi^{-1}(\{t\})$,
	\begin{equation}\label{oncc}
		\int_{C} \left(\frac{D^\perp\psi(x)}{|D^\perp\psi(x)|}, D g(x)\right)d\mathcal{H}^1(x) = 0,\quad \forall g \in C^\infty_c(\R^2).
	\end{equation}
 
	To show this we use the fact that any connected component $C$ is the intersection of the closures of decreasing open sets $U_n$ with $\partial U_n \cap \psi^{-1}(\{t\}) = \emptyset$, as noted in \cite[Section 2.8]{ABCa}, see also \cite[Lemma 3.8]{Alberti2014}. We then have $\dist(\partial U_n, \psi^{-1}(\{t\})) = 2\eps_n > 0$, with $\eps_n \to 0$.\ Hence set, for the standard mollifying kernels $\rho_\eps$,
	\[
	\chi_n(x) \doteq (\chi_{U_n}\star \rho_{\frac{\eps_n}{2}})(x). 
	\]
	Notice that $D\chi_n(x) = 0$ over $\psi^{-1}(\{t\})$ since $\dist(\partial U_n, \psi^{-1}(\{t\})) = 2\eps_n > 0$, and that $\chi_n(x) \to \chi_C(x)$ at all points $x$. We then take as a test function in \eqref{psit} $\eta = \chi_n g$, for any $g \in C^\infty_c(\R^2)$, to obtain
	\[
	0 = \int_{\psi^{-1}(\{t\})}\left(\frac{D^\perp\psi(x)}{|D^\perp\psi(x)|}, D(\chi_n g)\right)d\mathcal{H}^1 = \int_{\psi^{-1}(\{t\})}\chi_n(x)\left(\frac{D^\perp\psi(x)}{|D^\perp\psi(x)|}, D g\right)d\mathcal{H}^1, \quad \forall n \in \N.
	\]
	Thus, letting $n \to \infty$, \eqref{oncc} follows.\ We are finally in position to check the validity of the statement of the ongoing corollary.

	\medskip
	
	\fbox{\emph{Step 2, proof of \eqref{cor1}:}} By the definition of $T_1$, $\eqref{cor1}$ is fulfilled.\ 
	
	\medskip
	
	\fbox{\emph{Step 3, proof of \eqref{cor2}:}}
	To see that each connected component which is not a point can be parametrized by an injective Lipschitz parametrization we can invoke\footnote{To apply \cite[Lemma 2.17]{ABCa} we need to exclude triods in the level set $\psi^{-1}(\{t\})$, but this is achieved by the fact that $t \in T_1$, since triods are neither Jordan curves nor homeomorphic to intervals. See \cite[Section 2.3]{ABCa} for the definition of triod.} \cite[Lemma 2.17]{ABCa}. For now, we cannot exclude that the curves may be non-closed. Namely, from \cite[Lemma 2.17]{ABCa}, we find that each component $C$ which is not a point can be parametrized by a Lipschitz, injective parametrization $\gamma: [a,b] \to C$ which is either injective over all $[a,b]$ (namely, it is not a closed curve), or it parametrizes a Jordan curve, i.e. $\gamma(a) = \gamma(b)$ but $\gamma$ is injective over $[a,b)$. We call these curves of type $(A)$ and $(B)$ respectively. When we show \eqref{cor4}, we need to exclude curves of type $(A)$ for $t \in T$. Up to this detail, \eqref{cor2} is shown, provided we choose $\gamma$ parametrized by arclength, which we can do without loss of generality. 
	
	\medskip
	
	\fbox{\emph{Step 4, proof of \eqref{cor3}:}} Concerning \eqref{cor3}, fix any $t \in T$.\ Then, \eqref{measu} shows the first part of the statement.\ To show the second part, we fix any $t \in T$ for which $\psi^{-1}(\{t\})$ has a connected component that is not a point, and we fix one such component $C$, parametrized by $\gamma(s)$. Let 
	\[
	S' = \{s \in [a,b]: \psi \text{ is not differentiable at } \gamma(s) \text{ or it is but $D\psi(\gamma(s)) = 0$}\}.
	\]
	We have
	\begin{equation}\label{measu2}
		\mathcal{H}^{1}(\gamma(S')) \le \mathcal{H}^1(\psi^{-1}(\{t\}) {\cap} (S^c \cap D)^c) \overset{\eqref{measu}}{=} 0.
	\end{equation}
    Since $\gamma$ is injective on $[a,b)$ and $|\gamma'(s)| = 1$ for a.e. $s \in [a,b]$, the area formula \cite[Theorem 3.9]{EVG} gives us that for all measurable $F \subset [a,b]$,
	\[
	\mathcal{L}^1(F) = \int_{F}|\gamma'|(s)ds = \mathcal{H}^1(\gamma(F)).
	\]
	Thus, from \eqref{measu2} we get that $\mathcal{L}^1(S') = 0$. Hence, \eqref{cor3} holds.

	\medskip
	
	\fbox{\emph{Step 5, proof of \eqref{cor4} and absence of type $(A)$ curves:}} From \eqref{cor3} and the equality $\psi(\gamma(s)) = t, \forall s \in [a,b]$,\ we can employ the chain rule to compute:
	\[
	(D\psi(\gamma(s)),\gamma'(s)) = 0, \quad \text{ for a.e. }s \in [a,b].
	\]
	As $D\psi(\gamma(s)) \neq 0$ at a.e. $s$, we obtain the existence of a measurable function $\sigma_C: [a,b] \to \{-1,1\}$ such that
	\begin{equation}\label{gamma'}
		\gamma'(s) = \sigma_C(s)\frac{D^\perp\psi(\gamma(s))}{|D^\perp\psi(\gamma(s))|}.
	\end{equation}
	Starting from \eqref{oncc} and \eqref{gamma'}, we can use the area formula to write:
	\[
	0 \overset{\eqref{oncc}}{=} \int_{C} \left(\frac{D^\perp\psi(x)}{|D^\perp\psi(x)|}, D g\right)d\mathcal{H}^1(x) = \int_{a}^b \left(\frac{D^\perp\psi(\gamma(s))}{|D^\perp\psi(\gamma(s))|}, D g(\gamma(s))\right)ds \overset{\eqref{gamma'}}{=} \int_{a}^b \sigma_C(s)(\gamma'(s), D g(\gamma(s)))ds.
	\]
 
	Due to the fact that $g \in C^\infty_c(\R^2)$ is arbitrary, \cite[Proposition 2.17]{GUS} shows that $\sigma_C(s)$ is constant, and, up to reparametrizing $\gamma$ we can assume $\sigma_C \equiv 1$. Once this is shown, we can still use the previous equality to get
	\[
	g(\gamma(b))-g(\gamma(a)) = 0,
	\]
	which can only be possible if $\gamma(b) =\gamma(a)$. Thus, $\gamma$ was a type $(B)$ curve and this step is showed.
	
	\medskip
	
	\fbox{\emph{Step 6, proof of \eqref{it:count1}:}} We define $G_t \doteq \psi^{-1}(\{t\})\setminus B_t$.\ For $t \in T$, $G_t$ is a countable union of curves (since $\mathcal{H}^1(\psi^{-1}(t)) < + \infty$) hence it is a Borel set. Thus, so is $B_t$.\ 	We fix $\delta > 0$. Start by choosing an open set $A_\delta$ containing $B_t$ with
	\begin{equation}\label{e:adelta}
		\mathcal{H}^1(\psi^{-1}(\{t\})\cap (A_\delta\setminus B_t)) \le \delta.
	\end{equation}
	Every set $\{x_0\}$ with $x_0 \in B_t$ is a connected component of $\psi^{-1}(\{t\})$. Thus, as in Step 1, we find a decreasing sequence of open sets $U_n(x_0)$ such that $\cap_{n}\overline{U_n(x_0)} = \{x_0\}$, and furthermore $\partial U_n(x_0) \cap \psi^{-1}(\{t\}) = \emptyset$ for all $n$. In particular, $\diam(U_n(x_0)) \to 0$. Therefore, if $n(x_0)$ is sufficiently large, we can consider
	\[
	A \doteq \bigcup_{x_0 \in B_t} U_{n(x_0)}(x_0),
	\]
	and achieve
	\[
	B_t \subset A \subset A_\delta,\quad \diam(U_{n(x_0)}) \le \delta  \text{ and } \partial U_{n(x_0)}(x_0) \cap \psi^{-1}(\{t\}) = \emptyset, \quad \forall x_0.
	\]
	Furthermore, we can pick a family of countably many sets $W_m = U_{n(x_m)}(x_m)$ such that $A = \cup_m W_m$. Finally, set $V_m \doteq W_m \setminus (W_{1}\cup\dots \cup W_{m-1})$.\ It can be verified that $V_m$ still enjoys the property that $\partial V_m \cap \psi^{-1}(\{t\}) = \emptyset$. In turn, this allows us to say that
	\begin{equation}\label{e:divm}
	\dv(\tau \mathcal{H}^1\llcorner (\psi^{-1}(\{t\}) \cap V_m)) = 0 \text{ in $\R^2$}, \text{ if } \tau \doteq \frac{D^\perp \psi}{|D^\perp \psi|}.
	\end{equation}
	Indeed, $\partial V_m \cap \psi^{-1}(\{t\}) = \emptyset$ implies that $V_m\cap \psi^{-1}(\{t\})$ and $\psi^{-1}(\{t\})\setminus V_m$ have disjoint closures. Given then any smooth, compactly supported function $\sigma$ which is $1$ on a neighborhood of the closure of the former and $0$ on a neighborhood of the closure of the latter, we can compute, for any $\eta \in C^\infty_c(\R^2)$:
	\[
	\int_{\psi^{-1}(\{t\}) \cap V_m} (D\eta, \tau)d\mathcal{H}^1 = \int_{\psi^{-1}(\{t\}) \cap V_m} (D(\sigma\eta), \tau)d\mathcal{H}^1 = \int_{\psi^{-1}(\{t\})} (D(\sigma\eta), \tau)d\mathcal{H}^1 = 0,
	\]
	the last equality being true since $t \in T_3$.\ Therefore, from \eqref{e:divm} we find the existence of a $BV(\R^2)$ function $u_{m,\delta}$ such that
	\[
	Du_{m,\delta} = \tau^\perp \mathcal{H}^1\llcorner (\psi^{-1}(\{t\}) \cap V_m).
	\]
	Observe that, up to subtracting a constant, we can assume $\spt(u_{m,\delta}) \subset \co(V_m)$ for all $m$, where $\co(V_m)$ is the closure of the convex hull of $V_m$.\ Therefore from H\"older and Sobolev inequalities we find that
	\begin{equation}\label{e:sob}
	\int_{\R^2}|u_{m,\delta}|dx \le \mathcal{L}^2(\spt(u_{m,\delta}))^\frac{1}{2}\left(\int_{\R^2}|u_{m,\delta}|^2dx\right)^\frac{1}{2} \le C\mathcal{L}^2(\co(V_m))^\frac{1}{2}\|Du_{m,\delta}\|(\R^2) \le C\delta\mathcal{H}^1(\psi^{-1}(\{t\})\cap V_m),
	\end{equation}
	where we have used that $\diam(V_m) \le \delta$. We set $u_\delta \doteq \sum_m u_{m,\delta}$, and observe that
	\[
	Du_\delta = \tau^\perp \mathcal{H}^1\llcorner (\psi^{-1}(t)\cap A) \text{ in }\R^2.
	\]
	From \eqref{e:sob}, we find that $\|u_\delta\|_{L^1(\R^2)} \le C\delta$. Thus, for any $\Phi \in C^\infty_c(\R^2,\R^2)$, we get
	\[
	(\tau^\perp \mathcal{H}^1\llcorner B_t)(\Phi) = \int_{B_t}(\tau^\perp,\Phi)d\mathcal{H}^1 = -\int_{(\psi^{-1}(t)\cap A)\setminus B_t}(\tau^\perp,\Phi)d\mathcal{H}^1 - \int_{\R^2}u_\delta\dv(\Phi)dx.
	\]
	Hence, thanks to $\|u_\delta\|_{L^1(\R^2)} \le C\delta$ and \eqref{e:adelta}, we find:
	\begin{align*}
	|(\tau^\perp \mathcal{H}^1\llcorner B_t)(\Phi)| &\le C(\Phi)\left(\mathcal{H}^1((\psi^{-1}(t)\cap A)\setminus B_t) + \|u_\delta\|_{L^1}\right)\\
	&\le C(\Phi)\left(\mathcal{H}^1((\psi^{-1}(t)\cap A_\delta)\setminus B_t) + \|u_\delta\|_{L^1}\right) \le C(\Phi)\delta
	\end{align*}
	We then find that $(\tau^\perp \mathcal{H}^1\llcorner B_t)(\Phi) = 0$ for all $\Phi \in C^\infty_c(\R^2)$, and hence that $\tau^\perp \mathcal{H}^1\llcorner B_t =0$. This implies that $\mathcal{H}^1(B_t) = 0$, as wanted.
\end{proof}

We are finally in position to show Proposition \ref{const}.

\begin{proof}[Proof of Proposition \ref{const}]
	Assume first the PDE \eqref{pde} holds. This means that
	\begin{equation}\label{pdeweak}
		0 = \int_{B_1} \beta(x)(D^\perp\psi(x), D\Phi(x))dx,\quad \text{for all }\Phi \in C^\infty_c(B_1).
	\end{equation}
	We claim that there exists a set of values $t \in T' \subset \R$, with $\mathcal{L}^1(\R \setminus T') = 0$ for which, on any connected component $C \subset \psi^{-1}(\{t\})$,
	\begin{equation}\label{Cbeta}
		0 = \int_{C} \beta(x)\left(\frac{D^\perp\psi}{|D^\perp\psi|}(x), D\eta(x)\right)d\mathcal{H}^1,\quad \text{for all }\eta \in C^\infty_c(B_1).
	\end{equation}
	The proof of this property is precisely where we use our assumption \eqref{bounds}, but, since the computations are entirely analogous to those of the last part of Step 1 of the proof of Corollary \ref{lipcharcomp}, we will only sketch it.\ We consider the test function $\Phi_\eps = \varphi(\psi_\eps) \eta$, for any $\varphi \in C^\infty_c(\R)$, $\eta \in C^\infty_c(B_1)$, where $(\psi_\eps)_\eps$ is a family of smooth functions converging in $W^{1,p}(B_1)$ to $\psi$.\ We can plug in $\Phi_\eps$ in the weak form of the equation $\dv(\beta D^\perp \psi) = 0$ to obtain 
	\begin{equation}\label{split2}
		\begin{split}
			0 &= \int_{\R^2}\beta \left(D^\perp\psi, D(\varphi(\psi_\eps)\eta)\right)dx  = \int_{\R^2} \beta\varphi(\psi_\eps)\left(D^\perp\psi, D\eta\right)dx + \int_{\R^2}\beta \varphi'(\psi_\eps)\eta\left(D^\perp\psi, D\psi_\eps\right)dx.
		\end{split}
	\end{equation}
	Since $p > 2$, we find immediately that the first addendum converges to
	\begin{equation}\label{eq:final}
	\int_{\R^2} \beta(x) \varphi(\psi(x))\left(D^\perp\psi(x), D\eta(x)\right)dx.
	\end{equation}
	We need to show that the second one converges to zero.\ To see this, observe that $\eqref{bounds}$ implies that $\beta |D\psi|^2 \in L^1$, and hence, for some constant $C > 0$,
\begin{align*}
	 \left|\int_{\R^2} \beta \varphi'(\psi_\eps )\eta \left(D^\perp\psi \right.\right., &\left.\left. D\psi_\eps \right)dx\right| = \left|\int_{\R^2} \beta \varphi'(\psi_\eps )\eta \left(D^\perp\psi , D\psi_\eps -D\psi \right)dx\right| \\
	 & \le C\int_{B_1}|\beta||D\psi||D\psi_\eps-D\psi|dx \\
	 &\le C \left(\int_{B_1}|\beta||D\psi|^2dx\right)^\frac{1}{2}\left(\int_{B_1}|\beta||D\psi - D\psi_\eps|^2dx\right)^\frac{1}{2}\\
	 & \le C\left(\int_{B_1}|\beta||D\psi|^2dx\right)^\frac{1}{2}\left(\int_{B_1}|\beta|^r\right)^\frac{1}{2r}\left(\int_{B_1}|D\psi - D\psi_\eps|^{2r'}dx\right)^\frac{1}{2r'},
\end{align*}
where $r' = \frac{r}{r-1}$ is the conjugate exponent of $r$.\ Due to our assumption \eqref{bounds} $2r' \le p$, and hence the last integral in \eqref{split2} converges to $0$. Combining this with \eqref{eq:final}, \eqref{split2} shows
\[
\int_{\R^2} \beta(x) \varphi(\psi(x))\left(D^\perp\psi(x), D\eta(x)\right)dx = 0, \quad \text{ for any $\varphi \in C^\infty_c(\R)$, $\eta \in C^\infty_c(B_1)$.} 
\]
Now we can apply verbatim the computations of Step 1 of Corollary \ref{lipcharcomp} to obtain our claim \eqref{Cbeta}.\ Once this is achieved, we can exclude the trivial case where $C$ is a single point, and hence we can use the parametrization of $C$ provided by \eqref{cor12}-\eqref{cor13}-\eqref{cor14} of Corollary \ref{lipchar} and the area formula to rewrite
	\begin{equation}\label{areaC}
		\int_{C} \beta(x)\left(\frac{D^\perp\psi}{|D^\perp\psi|}(x), D\eta\right)d\mathcal{H}^1 = \int_a^b \beta(\gamma(s))(\gamma'(s),D\eta(\gamma(s)))ds.
	\end{equation}
	\eqref{Cbeta} implies that the left-hand side of \eqref{areaC} is $0$, and \cite[Proposition 2.17]{GUS} yields that $\beta(\gamma(s))$ is constant, as wanted.
	
	\medskip
	
	Assume now that $\beta$ is constant on every connected component of a.e. level set of $\psi$.\ Fix any $\eta\in C^\infty_c(B_1)$.\ Then, if $C$ is not a single point and $\beta \equiv \beta_C$ on $C$, \eqref{areaC} still implies that
	\[
	\int_{C} \beta(x)\left(\frac{D^\perp\psi}{|D^\perp\psi|}(x), D\eta\right)d\mathcal{H}^1 = \int_a^b \beta(\gamma(s))(\gamma'(s),D\eta(\gamma(s)))ds = \beta_C(\eta(\gamma(b))-\eta(\gamma(a))) = 0,
	\]
	the last equality being true in view of Corollary \ref{lipchar}\eqref{cor12}. In turn, using the notation and conclusion of Corollary \ref{lipchar}\eqref{it:count}, we can consider the (at most) countably many curves $C_i \in \mathcal{C}_t$ and, recalling that $\mathcal{H}^1(B_t) = 0$, write
 
	\begin{equation}\label{eq:lastzer}
	\int_{\psi^{-1}(\{t\})} \beta(x)\left(\frac{D^\perp\psi}{|D^\perp\psi|}(x), D\eta\right)d\mathcal{H}^1 = \sum_{i}\int_{C_i} \beta(x)\left(\frac{D^\perp\psi}{|D^\perp\psi|}(x), D\eta\right)d\mathcal{H}^1 = 0.
	\end{equation}
	Therefore, recalling $S$ from Corollary \ref{lipchar}\eqref{cor13} and using once again the coarea formula \eqref{coarea}:
	\begin{align*}
		\int_{B_1} \beta(x)(D^\perp\psi(x), D\eta)dx &= \int_{B_1\cap S^c} \beta(x)\left(\frac{D^\perp\psi}{|D^\perp\psi|}(x), D\eta\right) |D^\perp\psi|dx \\
		&=\int_\R \left(\int_{\psi^{-1}(\{t\})\cap S^c} \beta(x)\left(\frac{D^\perp\psi}{|D^\perp\psi|}(x), D\eta\right)d\mathcal{H}^1\right) dt \\
		&=\int_\R \left(\int_{\psi^{-1}(\{t\})} \beta(x)\left(\frac{D^\perp\psi}{|D^\perp\psi|}(x), D\eta\right)d\mathcal{H}^1\right) dt \overset{\eqref{eq:lastzer}}{=} 0,
	\end{align*}
	where to pass from the second to the third line we used Corollary \ref{lipchar}\eqref{cor13}.\ The proof is then concluded.
\end{proof}

\bibliographystyle{plain}
\bibliography{BiblioPressureless}

\end{document}